\newtheorem{theorem}{Theorem}
\newtheorem{proposition}[theorem]{Proposition}
\newtheorem{lemma}[theorem]{Lemma}
\newtheorem{definition}[theorem]{Definition}
\newtheorem{corollary}[theorem]{Corollary}
\newtheorem{remark}[theorem]{Remark}
\newtheorem{step}{Step}
\numberwithin{equation}{section}
\numberwithin{theorem}{section}
\theoremstyle{definition}
\newcommand{\bs}{\boldsymbol} 
\newcommand{\din}{\rotatebox{90}{\ensuremath{\in}}}
\newcommand{\reduce}[1]{\scalebox{1}{\ensuremath{#1}}}
\newcommand{\mapright}[1]{\smash{\mathop{   \hbox to 0.7cm{\rightarrowfill}}
 \limits^{#1}}}
\newcommand{\restrict}[2]{\ensuremath{\left. #1 \right|_{#2}}}
\newcommand{\conv}{{\rm conv}}
\newcommand{\ctext}[1]{\raise0.2ex\hbox{\textcircled{\scriptsize{#1}}}}
\DeclareMathOperator{\Hom}{Hom}
\DeclareMathOperator*{\smallcup}{\reduce{\bigcup}}
\def\C{\mathbb C}
\def\cone{\mathrm{Cone\,}}
\def\D{\Delta}
\def\ep{\varepsilon}
\def\N{\mathbb N}
\def\p{\partial}
\def\P{\mathbb P}
\def\Q{{\mathbb Q}}
\def\R{{\mathbb R}}
\def\Vol{\mathrm{Vol}}
\def\Z{\mathbb Z}
\begin{document}

\title[Bott manifolds with vanishing Futaki invariants for all K\"ahler classes]{Bott manifolds with the strong Calabi dream structure}

\author{Hajime Ono}
\address{Department of Mathematics,
University of Tsukuba, 
1-1-1 Tennodai, Tsukuba, Ibaraki 305-8571 Japan}
\email{h-ono@math.tsukuba.ac.jp}

\author{Yuji Sano}
\address{Department of Applied Mathematics, 
Faculty of Science, 
Fukuoka University,
8-19-1 Nanakuma, Jonan-ku, 
Fukuoka 814-0180 Japan}
\email{sanoyuji@fukuoka-u.ac.jp}

\author{Naoto Yotsutani}

\address{Faculty of Education, Kagawa University,
1-1 Saiwai-cho, Takamatsu 760-8521, Japan}
\email{yotsutani.naoto@kagawa-u.ac.jp}

\makeatletter
\@namedef{subjclassname@2020}{%
  \textup{2020} Mathematics Subject Classification}
\makeatother

\subjclass[2020]{Primary 51M20; Secondary 57S12, 53C55, 14M25}

\keywords{Bott manifolds, strong Calabi dream manifolds, toric varieties, Futaki invariants} \dedicatory{}
\date{\today}
\maketitle

\noindent{\bfseries Abstract.}
We prove that the only Bott manifolds such that the Futaki invariant vanishes for any K\"ahler class are isomorphic to the products of the projective lines.

\section{Introduction}
One of the main problems in K\"ahler geometry is the existence problem of {\emph{canonical}} K\"ahler metrics on a K\"ahler manifold with fixed K\"ahler class. 
This problem began with the case of K\"ahler-Einstein metrics as a Calabi conjecture which states that any real $(1,1)$-form representing the first Chern class is given by the Ricci form of some 
K\"ahler metric in a given K\"ahler class. This problem was solved by Aubin \cite{Aub76} and Yau \cite{Ya78} for the case of negative first Chern class, and by Yau for zero first Chern class. 
For the positive first Chern class case, several obstructions were found. One of them is the vanishing of the {\emph{Futaki invariant}} \cite{Fu83a}. 
This obstruction was generalized to the case of constant scalar curvature K\"ahler metrics (cscK metrics) by Futaki \cite{Fu83b}, Calabi \cite{Ca85}. 
On the other hand, if there exists a cscK metric in the fixed K\"ahler class, then it must be unique up to automorphisms \cite{BB17, CT08}.

As a generalization of cscK metrics, Calabi introduced {\emph{extremal}} K\"ahler metrics,
and constructed an explicit extremal K\"ahler metric in each K\"ahler class on the Hirzebruch surfaces \cite{Ca82}.
This led to the speculation that extremal K\"ahler metrics exist in every K\"ahler class for any compact K\"ahler manifold.
However, his expectation was not true in general \cite{Le85, WZ12}. 
In honor of Calabi's original vision, X. Chen and J. Cheng introduced the notion of a {\emph{Calabi dream manifold (CDM)}}, that is a compact K\"ahler manifold which admits an extremal K\"ahler metric in each K\"ahler class \cite[Definition $1.12$]{CC21}. 
At this moment, several examples of CDMs have been discovered, for instance, \cite{Ca82,Gu95, Hw94, ACGT-F08, SW13}. 
In particular, Apostolov-Calderbank-Gauduchon-T\o nnesen-Friedman established the {\emph{admissible construction}}, and then found CDMs among some classes of projective bundles. 
In \cite{BCT-F19}, Boyer-Calderbank-T\o nnesen-Friedman applied the admissible construction to {\emph{Bott manifolds}}, which is the main focus of this paper.
Also, they found many explicit examples of Bott manifolds with extremal K\"ahler metrics.

The existence of cscK/extremal metrics on a polarized manifold has been expected to be equivalent to K-polystability/relative K-polystability (Yau-Tian-Donaldson conjecture). 
In particular, the existence problem of extremal K\"ahler metrics on Bott Fano manifolds is relevant to 
classify certain class of toric Fano manifolds in terms of relative K-polystability
(see, Section $3.5$ in \cite{NSY23}). 
Specifically, the third author and B. Zhou partially classified relative K-polystable toric Fano threefolds using a combinatorial criteria of relative K-stability established in \cite{YZ19}.
Although it still remains an open problem to verify relative K-polystability for some of toric Fano threefolds (see, \cite[Table1]{YZ23}),
we can adapt Bott structure to verify relative K-polystability for certain toric Fano manifolds.
For example, toric Fano fourfold $\P_{(\P^1)^3}(\mathcal O\oplus \mathcal O(1,1,1))$, (which is labeled $L_1$ in \cite{Bat98}), is an example of Fano Bott manifolds at stage four, and the criterion in \cite{YZ19} cannot be applied. However, it is known that $L_1$ is a CDM, so that we conclude that $L_1$ is relatively K-polystable with respect to the anticanonical polarization (see, Corollary $1.9$ in \cite{NSY23}).

As a special class of CDM, Martinez-Garcia \cite{MG21} called a compact K\"ahler manifold $X$ a {\emph{strong Calabi dream manifold}} (SCDM) if $X$ admits a cscK metric in each K\"ahler class. 
It was proved in \cite[Corollary $1.7$]{MG21} that any smooth projective rational strong Calabi dream surface is either $\P^2$ or $\P^1\times \P^1$.
The main interest of this paper is to classify all SCDMs of arbitrary dimension. Our main result is the following.
\begin{theorem}\label{thm:main}
Let $X$ be a stage $n$ Bott manifold. Then the Futaki invariant of $X$ vanishes for any K\"ahler class if and only if
 $X$ is isomorphic to an $n$-fold product of the projective line
i.e., $X\cong (\P^1)^n$.
\end{theorem}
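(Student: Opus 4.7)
The plan is to translate the hypothesis into a combinatorial statement about moment polytopes via the toric structure of Bott manifolds, and then induct on the stage $n$.

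A stage $n$ Bott manifold $X = X(A)$ is a smooth projective toric variety encoded by a strictly upper triangular integer matrix $A$; its Kähler cone is parametrized by $\bs{x} = (x_1,\ldots,x_n) \in \mathbb{R}^n_{>0}$, each tuple determining a Kähler class $[\omega_{\bs{x}}]$ whose moment polytope $\Delta(A;\bs{x})\subset\mathbb{R}^n$ is combinatorially an $n$-cube with its $2n$ facets tilted and scaled according to $(A,\bs{x})$. Restricting the Futaki invariant to the abelian Lie algebra of the real torus and using the Donaldson--Mabuchi formula, for every linear function $f$ on $\Delta$ one has
\[
\mathrm{Fut}_{[\omega_{\bs{x}}]}(\xi_f) \;=\; \int_{\partial\Delta} f\,d\sigma \;-\; \frac{\mathrm{Vol}(\partial\Delta)}{\mathrm{Vol}(\Delta)} \int_{\Delta} f\,dx.
\]
Hence the hypothesis $\mathrm{Fut}_{[\omega_{\bs{x}}]}\equiv 0$ forces, for every $\bs{x}\in\mathbb{R}^n_{>0}$, the centroid of $\Delta(A;\bs{x})$ (with Lebesgue measure) to coincide with the centroid of $\partial\Delta(A;\bs{x})$ (with hypersurface measure) — what I will call the \emph{twin-centroid condition}.

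For $n=1$ there is nothing to prove. For the induction step, view $X = X_n \to X_{n-1}$ as a $\P^1$-bundle whose Bott line bundle $L_n$ is encoded by the last column of $A$; then $\Delta(A;\bs{x})$ is a prism over the lower-stage Bott polytope $\Delta(A';\bs{x}')\subset\mathbb{R}^{n-1}$ of vertical width $x_n$, with top and bottom facets tilted by $L_n$. Computing both centroids by Fubini in the last coordinate and expanding the twin-centroid identity in $x_n$ near $0$: for each of the first $n-1$ coordinates the $O(x_n)$ terms cancel identically, and at the next order one extracts precisely the twin-centroid identity for $\Delta(A';\bs{x}')$ valid for every $\bs{x}'\in\mathbb{R}^{n-1}_{>0}$. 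By the inductive hypothesis $X_{n-1}\cong(\P^1)^{n-1}$, so $A' = 0$ and $\Delta(A';\bs{x}')$ is a rectangular box.

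With $A'=0$ the problem reduces to a single-stage question: assuming the twin-centroid condition holds for $\P(\mathcal{O}\oplus L_n) \to (\P^1)^{n-1}$ in \emph{every} Kähler class $\bs{x}$, deduce that $L_n$ is trivial. I expect this to be the main obstacle, since the tilt of the top/bottom facets by $L_n$ breaks the product-box symmetry that gave the identity automatically in the trivial case. Concretely, after reduction the centroid difference is an explicit rational function of $(x_1,\dots,x_n)$ whose numerator is polynomial in the integer entries $a_1,\dots,a_{n-1}$ of the last column of $A$; one must show this polynomial vanishes identically in $\bs{x}$ only when every $a_i$ is zero. Evaluating along a well-chosen slice — say $x_1=\cdots=x_{n-1}=1$ with $x_n$ varying — and matching coefficients should yield a linear system forcing the $a_i$ to vanish. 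Combined with the inductive conclusion $A'=0$, this gives $A = 0$ and hence $X\cong(\P^1)^n$.
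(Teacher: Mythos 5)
Your overall strategy (reduce the vanishing of the Futaki invariant to a ``twin--centroid'' identity on the moment polytope, propagate it down the tower, then kill the last twist) is reasonable, and the propagation step can indeed be made to work: for fixed base parameters the Futaki quantity of each coordinate $x_i$, $i\leqslant n-1$, is a polynomial in the fibre parameter $x_n$, it vanishes on an open ray of the K\"ahler cone, and its top ($x_n^2$) coefficient is exactly the corresponding Futaki quantity of the base polytope. (Two caveats: your claim that the $O(x_n)$ terms ``cancel identically'' is not correct --- they vanish only as part of the hypothesis --- and ``expanding near $x_n=0$'' may leave the K\"ahler cone when the twist has negative entries; both are harmless once you invoke polynomiality. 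Also, the step ``$X_{n-1}\cong(\P^1)^{n-1}$ hence $A'=0$'' needs a word of justification, e.g.\ via primitive collections, since a Bott manifold can carry several tower structures.)

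The genuine gap is the step you yourself flag as ``the main obstacle'': showing that if $\P\bigl(\mathcal O\oplus\mathcal O(a_1,\dots,a_{n-1})\bigr)\to(\P^1)^{n-1}$ has vanishing Futaki invariant in every K\"ahler class then all $a_i=0$. This is the entire content of the theorem (everything before it only strips off the already-untwisted part), and you do not prove it; you only assert that evaluating on the slice $x_1=\dots=x_{n-1}=1$ and matching coefficients ``should yield a linear system.'' As stated this is not an argument: the coefficient equations obtained from the $x_n$-expansion are linear \emph{and} quadratic in the $a_i$, one must verify that the resulting system is nondegenerate, and one must also check that restricting to a single slice of base parameters loses no information. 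The paper resolves precisely this difficulty, and does so uniformly at every stage rather than only over a product base: it slices the polytope along a direction with two parallel facets, shows that the third derivative of the Futaki polynomial in the height parameter at the degenerate endpoint equals $2f'(0)f(0)$, where $f$ is the slice-volume function (hence $f$ is constant), and then uses the equality case of the Brunn--Minkowski inequality to conclude that the polytope is a metric prism, so a $\P^1$-factor splits off. Your proposal has no substitute for this degeneration-plus-rigidity mechanism, so as written it does not constitute a proof; to complete it you would either have to carry out and verify the coefficient computation for the twisted bundle over $(\P^1)^{n-1}$ in full, or import an argument of the paper's type.
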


The uniqueness of cscK metrics in the fixed K\"ahler class implies the following splitting theorem on Bott manifolds with cscK metrics.
\begin{corollary}\label{cor:main}
Let $X$ be a stage $n$ Bott manifold which admits a cscK metric in each K\"ahler class.
Then $X$ is isomorphic to the products of the projective lines. Moreover, such a cscK metric in every K\"ahler class is the Riemannian product of $c_i\omega_{\mathrm{FS}}$ for $1\leqslant i \leqslant n$, 
where $c_i$ are some positive constants and $\omega_{\mathrm{FS}}$ is the Fubini-Study metric of the projective line.
\end{corollary}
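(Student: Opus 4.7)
The plan is to combine Theorem \ref{thm:main} with two standard facts from K\"ahler geometry: existence of a cscK metric in a K\"ahler class forces the Futaki invariant of that class to vanish \cite{Fu83b, Ca85}, and cscK metrics in a fixed K\"ahler class are unique up to automorphisms of $X$ \cite{BB17, CT08}.

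The first step is immediate: since by hypothesis $X$ carries a cscK metric in every K\"ahler class, the Futaki invariant vanishes on every K\"ahler class, and Theorem \ref{thm:main} yields $X \cong (\P^1)^n$. For the moreover statement I will first exhibit an explicit cscK representative in each K\"ahler class on $(\P^1)^n$. Writing $\pi_i \colon (\P^1)^n \to \P^1$ for the $i$-th projection, every K\"ahler class on $(\P^1)^n$ takes the form $\sum_{i=1}^n c_i\pi_i^*[\omega_{\mathrm{FS}}]$ with $c_i>0$, and the Riemannian product $\omega_0 := \sum_{i=1}^n c_i\pi_i^*\omega_{\mathrm{FS}}$ lies in this class and is cscK, because scalar curvature is additive under K\"ahler products and each factor $(\P^1, c_i\omega_{\mathrm{FS}})$ already has constant scalar curvature.

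It then remains to identify a general cscK metric $\omega$ in the same class with $\omega_0$ up to automorphism. By uniqueness there exists $\varphi \in \mathrm{Aut}_0((\P^1)^n) = \mathrm{PGL}(2,\C)^n$ with $\omega = \varphi^*\omega_0$. Since this action is factorwise, writing $\varphi = (\varphi_1,\dots,\varphi_n)$ gives $\omega = \sum_{i=1}^n c_i\pi_i^*(\varphi_i^*\omega_{\mathrm{FS}})$, which is again a Riemannian product whose $i$-th factor is a cscK metric on $\P^1$ in the class $c_i[\omega_{\mathrm{FS}}]$. Thus $\omega$ is isomorphic as a K\"ahler manifold to the Riemannian product of the $(\P^1, c_i\omega_{\mathrm{FS}})$. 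The main point requiring care is precisely this last step: one must invoke the factorwise structure of $\mathrm{Aut}_0((\P^1)^n)$ to ensure the Riemannian splitting persists after the uniqueness automorphism. This is however immediate once one identifies $\mathrm{Aut}_0((\P^1)^n) = \mathrm{PGL}(2,\C)^n$, so the corollary follows without further work.
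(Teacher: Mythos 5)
Your proposal is correct and follows the same route the paper intends: vanishing of the Futaki invariant for every K\"ahler class (via Futaki--Calabi) combined with Theorem \ref{thm:main} gives $X\cong(\P^1)^n$, and the uniqueness of cscK metrics up to automorphisms, together with the factorwise structure of $\mathrm{Aut}_0((\P^1)^n)=\mathrm{PGL}(2,\C)^n$, identifies any cscK representative with the product metric up to a biholomorphic isometry. Your explicit treatment of the last step (pulling back by $\varphi=(\varphi_1,\dots,\varphi_n)$ and noting each $(\P^1,\varphi_i^*(c_i\omega_{\mathrm{FS}}))$ is isometric to $(\P^1,c_i\omega_{\mathrm{FS}})$) is exactly the detail the paper leaves implicit.
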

Finally, we explain a related work to Theorem $\ref{thm:main}$. Following the Yau-Tian-Donaldson conjecture, we can consider an algebraic counterpart of SCDM: 
a polarized manifold $(X,L)$ which is K-polystable for any polarization. By using the notion of slope semistability in \cite{RT07}, which is a weaker notion of K-semistability, Fujita and the third author proved the following.
\begin{theorem}[\cite{FY24}]\label{thm:FY}
Let $X$ be a stage $n$ Bott manifold. For any ample $\Q$-line bundle $L$ on $X$, if $(X,L)$ is slope semistable along any smooth prime divisor $D$ in $X$, then $X\cong (\P^1)^n$.
\end{theorem}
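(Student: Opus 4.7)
\emph{Proof plan.} I would argue by induction on the stage $n$. The base case $n=1$ gives $X = \mathbb{P}^1$ and nothing to prove. Assume $n \geq 2$ and the theorem holds in stage $n-1$. A stage-$n$ Bott manifold is toric, its moment polytope being a product of intervals twisted by the Bott integers, so the Ross--Thomas slope criterion along toric prime divisors admits a fully combinatorial form in these integers. Writing the Bott tower as $X = X_n \xrightarrow{\pi} X_{n-1} \to \cdots \to X_0 = \{\mathrm{pt}\}$ with $X_n = \mathbb{P}(\mathcal{O} \oplus L_n)$, the smooth prime divisors relevant to the argument are the zero section $D_0$ and infinity section $D_\infty$ of $\pi$ (each isomorphic to $X_{n-1}$) and the preimages $\pi^{-1}(D')$ of smooth prime divisors $D' \subset X_{n-1}$.

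The first step is to propagate the hypothesis down one stage. Given an ample $\mathbb{Q}$-polarization $M$ on $X_{n-1}$ and a smooth prime divisor $D' \subset X_{n-1}$, take $L_t = \pi^* M + t \xi$ on $X_n$ for a relatively ample class $\xi$ and small $t > 0$, and test slope semistability of $(X_n, L_t)$ along $\pi^{-1}(D')$. The toric intersection formulae express the Ross--Thomas inequality as a polynomial in $t$ whose $t \to 0^+$ limit collapses, fibre-by-fibre, onto the Ross--Thomas inequality for $(X_{n-1}, M)$ along $D'$. Thus $X_{n-1}$ inherits the full hypothesis, and by the inductive step $X_{n-1} \cong (\mathbb{P}^1)^{n-1}$.

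With the base a product of projective lines, write $L_n = \mathcal{O}(a_1,\ldots,a_{n-1}) \in \mathrm{Pic}((\mathbb{P}^1)^{n-1}) = \mathbb{Z}^{n-1}$ and suppose for contradiction that some $a_j \neq 0$. Exploiting the $\mathrm{Aut}((\mathbb{P}^1)^{n-1})$-action and swapping $D_0 \leftrightarrow D_\infty$ if needed, reduce to $a_1 > 0$, so that restricting $\pi$ to the first $\mathbb{P}^1$-factor produces the Hirzebruch surface $\mathbb{F}_{a_1}$, on which the infinity section is the prototypical Ross--Thomas destabilizing divisor for an open cone of polarizations. On the full Bott manifold $X_n$, one then computes the integrals $\int_0^{\epsilon}(L - xD_\infty)^n\,dx$ and $\int_0^{\epsilon}(L - xD_\infty)^{n-1}\cdot K_{X_n}\,dx$ over the moment polytope, and exhibits an open sub-cone of the ample cone of $X_n$ on which the Ross--Thomas slope inequality along $D_\infty$ fails. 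Since the hypothesis applies to \emph{every} ample $L$, this is a contradiction, forcing $a_1 = \cdots = a_{n-1} = 0$ and hence $X_n \cong (\mathbb{P}^1)^n$.

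The principal obstacle is the explicit toric computation in the third step: locating a non-empty open sub-cone of ample classes on which the Ross--Thomas destabilizing integrals along $D_\infty$ register the sign of some $a_j$ rather than being cancelled by contributions from the lower-stage Bott integers or by the linear coefficients of the chosen $L$. The polytope description keeps all volumes polynomial in the Bott integers and in the coefficients of $L$, so the bookkeeping is in principle finite; the delicate point is the sign-forcing selection of the sub-cone. A secondary subtlety is the first-stage descent: one must verify that the $t \to 0^+$ limit of the Ross--Thomas slope inequality for $(X_n, L_t)$ along $\pi^{-1}(D')$ is the exact semistability inequality on $(X_{n-1}, M)$, not a strictly weaker one.
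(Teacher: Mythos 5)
First, a caveat: this theorem is quoted from [FY23] and the present paper does not reprove it; the authors only record that the proof there ``was based on the computations of intersection numbers on Bott manifolds,'' and the Appendix explains that, under the extra hypothesis $\bigl((L-\ep D)^{\cdot n}\bigr)=0$, slope semistability along $D$ is equivalent to the vanishing of the Futaki invariant for the vector field attached to $\bs v_D$. Your plan is in the right spirit (induction on the stage, explicit toric intersection numbers, destabilizing a nontrivial twist by a suitable choice of $L$), but as written it is a plan rather than a proof: both load-bearing steps are deferred, and one of them has a concrete gap. In the descent step you take $L_t=\pi^*M+t\xi$ and let $t\to 0^+$. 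Since $\bigl((\pi^*M)^{\cdot n}\bigr)=0$, every intersection number entering $\xi_{L_t}(\pi^{-1}(D'))$ is $O(t)$, so the hypothesis only gives that the coefficient of $t$ is nonnegative. That coefficient is \emph{not} a positive multiple of $\xi_M(D')$ on the nose: the slope $\mu_{L_t}=\bigl(L_t^{\cdot n-1}\cdot(-K_{X_n})\bigr)/\bigl(L_t^{\cdot n}\bigr)$ converges to $\tfrac{n-1}{n}\mu_M$ plus a term coming from $-K_{X_n/X_{n-1}}=D_0+D_\infty$, and the integrand picks up further contributions from the relative canonical class and from the $t\xi$ factor needed to make the top intersection nonzero. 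You must verify that these corrections assemble into a nonnegative multiple of the Ross--Thomas quantity for $(X_{n-1},M)$ along $D'$ (or cancel), and also control the behaviour of the Seshadri constant $\ep_{L_t}(\pi^{-1}(D'))$ as $t\to 0^+$; none of this is automatic, and you flag it only as a ``secondary subtlety.''

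The third step is where the theorem actually lives, and your proposal explicitly postpones it (``the principal obstacle is the explicit toric computation''). Granting $X_{n-1}\cong(\P^1)^{n-1}$ and $N_n=\mathcal O(a_1,\dots,a_{n-1})$ with some $a_j\neq 0$, you still need to produce one ample $L$ and one smooth prime divisor $D$ with $\xi_L(D)<0$; the Hirzebruch-surface heuristic suggests where to look but does not by itself rule out cancellation against the coefficients of $L$ in higher dimension. A cleaner route, consistent with both [FY23] and the Appendix here, is to test along the torus-invariant divisors $D_{2i-1},D_{2i}$ fibering $X$ over a lower-stage quotient, where $\bigl((L-\ep D)^{\cdot n}\bigr)=0$ holds automatically, so that $\xi_L(D)$ reduces to the Futaki invariant computed in Section~\ref{sec:Proof} of this paper: there the polytope computation (equation $\eqref{eq:3rdDeriv}$ together with Proposition~\ref{prop:congr}) is exactly the sign-forcing argument you are missing. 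As it stands, the proposal identifies the correct skeleton but neither of its two essential steps is established.
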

As a corollary of Theorem $\ref{thm:FY}$, they concluded {that any strong Calabi dream Bott manifold is isomorphic to the products of projective lines. 
We remark that on a Bott manifold, the assumptions in Theorem $\ref{thm:main}$ and Theorem $\ref{thm:FY}$ would be equivalent under certain condition.
In fact, for any torus-invariant prime divisor $D$ on a Bott manifold $X$, we assume that $\bigl((L-\varepsilon D)^{\cdot n}\bigr)=0$, where $\varepsilon$ is the Seshadari constant of $D$ with respect to 
a polarization $L$. 
Under this assumption, we see that the Futaki invariant for the holomorphic vector field vanishing on $D$ is equal to the limit of the Donaldson-Futaki invariant of the degeneration to the normal cone of $D$, 
when the parameter goes to the Seshadri constant. Thus, we conclude that the vanishing of the Futaki invariant is equivalent to slope semistability for any smooth prime divisor on a Bott manifold with a given polarization, if $\bigl((L-\varepsilon D)^{\cdot n}\bigr)=0$. For the reader's convenience, we give more details in Section $\ref{sec:Appendix}$.
Although the conclusions are close to each other, the approaches of this article and \cite{FY24} are different: in the former, we go through 
integration over the moment polytope of $X$. 
In particular, a key ingredient of our proof is the classical {\emph{Brunn-Minkowski inequality}} for convex polytopes (see, Section $\ref{sec:BM_Ineq}$). 
In the latter, the proof was based on the computations of intersection numbers on Bott manifolds.

This paper is organized as follows. Section $\ref{sec:Toric}$ gives a quick review of toric varieties and their associated convex polytopes. Section $\ref{sec:Bott}$ collects basic facts of Bott manifolds.
In Section $\ref{sec:BottPC}$, we recall a combinatorial characterization of Bott manifolds in terms of their associated fan (see, Proposition $\ref{prop:BottPC}$).
We prove Theorem $\ref{thm:main}$ in Section $\ref{sec:Proof}$ by induction of the dimension of the Bott tower.
In the final section, we give the detailed explanation of the relation between Theorem $\ref{thm:main}$ and Theorem $\ref{thm:FY}$,
and clarify the difference between the main theorems of this article and the paper by Fujita and the third author \cite{FY24}.

Through the paper, we work over a complex number field $\C$, and a {\emph{manifold}} is a smooth irreducible complex projective variety. 

\vskip 7pt

\noindent{\bfseries Acknowledgements.}
This work was partially supported by the first author's JSPS KAKENHI Grant Number JP$17$K$05218$, JP$22$K$03281$,
second author's JP$22$K$03325$ and third author's JP$18$K$13406$, JP$22$K03316.
The second author is also supported by research funds from Fukuoka University (Grant Number $225001$-$000$).

\section{Toric geometry and convex polytopes}\label{sec:Toric}
This section collects some fundamental results in toric geometry which will be used at a later stage. 
See \cite{CLS11} and \cite{Ful93} for more details.  
A {\emph{toric variety}} $X$ is an algebraic normal variety with an effective holomorphic action of $T_\C:=(\C^\times)^n$ with $\dim_\C X=n$.
Let $T_\R:=(S^1)^n$ be the real torus in $T_\C$ and $\mathfrak{t}_\R$ be the associated Lie algebra. Let $N_\R:=J\mathfrak{t}_\R\cong \R^n$ be the associated Euclidean space where $J$ is the complex structure
of $T_\C$. We denote the dual space $\Hom (N_\R, \R)\cong \R^n$ of $N_\R$ by $M_\R$.
Setting the group of algebraic characters of $T_\C$ by $M$, we see that $M_\R=M\otimes_\Z\R$. 

Let us denote the dual lattice of $M$ by $N$. Then, $N$ consists of the algebraic one parameter subgroups of $T_\C$.
A {\emph{fan}} $\Sigma $ in $N_\R$ is defined to be a collection of 
strongly convex polyhedral lattice cones  
such that any face of a cone in $\Sigma$ is also an element of $\Sigma$ and the intersection of two cones in $\Sigma$ is a face of each. 
The {\emph{support}} of $\Sigma$ is defined as the union of all cones contained in 
$\Sigma$. We denote the support of $\Sigma$ by $|\Sigma|$, that is, $|\Sigma|=\smallcup_{\sigma\in \Sigma}\sigma$. A fan $\Sigma$ is said to be {\emph{smooth}} 
if any cone $\sigma\in \Sigma$ is smooth (i.e., its minimal generators form part of a $\Z$-basis of $N$). A fan $\Sigma$ is called {\emph{complete}} if its support is the whole space $N_\R$.
A fan $\Sigma$ is {\emph{polytopal}} if there exists a convex lattice polytope $P$ in $M_\R$ such that $0\in P$ and whose normal fan $\Sigma_P$ coincides with $\Sigma$.

From now on $X_\Sigma$ denotes the toric variety associated with a fan $\Sigma$ in $N_\R$. Then, geometric properties of $X_\Sigma$ can be characterized by properties of $\Sigma$ as follows.
\begin{proposition}[Theorem $3.1.19$ in \cite{CLS11}]
Let $X_\Sigma$ be the toric variety defined by a fan $\Sigma$ in $N_\R$. Then:
\begin{enumerate}
\item $X_\Sigma$ is a smooth variety if and only if $\Sigma$ is smooth.
\item $X_\Sigma$ is compact (in the Euclidean topology) if and only if $\Sigma$ is complete.
\item $X_\Sigma$ is projective if and only if $\Sigma$ is polytopal.
\end{enumerate}
\end{proposition}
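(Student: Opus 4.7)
The plan is to prove each of the three equivalences separately, using two standard tools: the affine cover $X_\Sigma=\bigcup_{\sigma\in\Sigma}U_\sigma$ with $U_\sigma=\operatorname{Spec}\C[\sigma^\vee\cap M]$, and the orbit-cone correspondence between cones $\sigma\in\Sigma$ and $T_\C$-orbits $O_\sigma\subset X_\Sigma$.

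For (1), smoothness is a local property, so $X_\Sigma$ is smooth if and only if each $U_\sigma$ is, and it suffices to prove that $U_\sigma$ is smooth precisely when the minimal generators of $\sigma$ form part of a $\Z$-basis of $N$. If $\sigma$ is smooth with generators extending to a $\Z$-basis $e_1,\ldots,e_n$ of $N$, then $\sigma^\vee\cap M$ is freely generated and a linear change of coordinates gives $U_\sigma\cong\C^k\times(\C^\times)^{n-k}$, which is smooth. Conversely, if $U_\sigma$ is smooth then the Zariski tangent space at the distinguished point $x_\sigma$ has dimension $n$, which forces the minimal generators of $\sigma^\vee\cap M$ to be $\Z$-linearly independent, and smoothness of $\sigma$ follows by duality.

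For (2), the strategy is to test properness of $X_\Sigma\to\operatorname{Spec}\C$ through one-parameter subgroups $\lambda_v:\C^\times\to T_\C$ with $v\in N$. A direct computation on each affine chart shows $\lim_{t\to 0}\lambda_v(t)$ exists in $X_\Sigma$ if and only if $v\in|\Sigma|$, with the limit landing in the orbit $O_\sigma$ for the smallest cone $\sigma$ containing $v$. If $\Sigma$ is complete, this suffices to verify the valuative criterion of properness; conversely, if $v\in N\setminus|\Sigma|$, the orbit curve $\lambda_v(\C^\times)$ admits no limit as $t\to 0$, obstructing properness.

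For (3), the forward direction is the substantive part: given a lattice polytope $P$ with normal fan $\Sigma$, one introduces the $T_\C$-invariant Weil divisor $D_P=\sum_{\rho\in\Sigma(1)}a_\rho D_\rho$ with $a_\rho=-\min_{m\in P}\langle m,u_\rho\rangle$ for the primitive ray generator $u_\rho$, and shows that the global sections of $\mathcal O(D_P)$ are spanned by the characters $\chi^m$ for $m\in P\cap M$. Strict convexity of the associated support function yields ampleness, and replacing $P$ by a sufficient multiple gives a projective embedding via these sections. Conversely, any $T_\C$-invariant ample divisor on $X_\Sigma$ determines a strictly convex support function on $|\Sigma|$, from which one extracts a convex polytope whose normal fan is $\Sigma$. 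The main obstacle lies in (3): identifying global sections with lattice points of $P$, verifying strict convexity of the support function, and gluing the local data into a projective embedding; parts (1) and (2) reduce to direct computations in the affine toric charts.
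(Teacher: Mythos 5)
This proposition is quoted in the paper from Theorem $3.1.19$ of Cox--Little--Schenck without any proof of its own, and your sketch reproduces the standard textbook argument from that source: smoothness via the affine charts $U_\sigma$ and the tangent space at the distinguished point, compactness via limits of one-parameter subgroups, and projectivity via the divisor $D_P$, lattice points of $P$, and strict convexity of the support function, so it is essentially the same approach. The only step to tighten is in (2): existence of limits of one-parameter subgroups does not by itself verify the valuative criterion of properness; one must take an arbitrary DVR-valued point, translate it into the torus, use its valuation on characters to produce an element $v\in N$, which by completeness lies in some cone $\sigma$, and then the same chart computation shows the point extends over $U_\sigma$ (together with the standard fact that properness over $\C$ is equivalent to compactness of the analytification).
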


Let $\Sigma$ be a complete fan in $N_\R$. For $1\leqslant k \leqslant n$, we let $\Sigma(k)$ denote the set of all $k$-dimensional cones in $\Sigma;$ $\Sigma(k):=\set{\sigma \in \Sigma |\dim \sigma=k}$.
For any finite subset $F\subset N_{\R}$, we set
\[
\cone(F):=\sum_{x\in F}\R_{\geqslant 0}\,x.
\]
Then, we define
\[
G_\Sigma:=\Set{\bs v_\sigma \in N | \sigma \in \Sigma(1): \text{ $\bs v_\sigma$ is primitive and $\cone(\set{\bs v_\sigma})=\sigma$ }}.
\]

\begin{definition}\rm
A nonempty subset $R\subset G_\Sigma$ is a {\emph{primitive collection}} of $\Sigma$, if it satisfies the conditions
\begin{enumerate}
\item[(a)] $\cone(R)\notin \Sigma$, \quad  and \quad~  (b) $\cone(R\setminus\set{x})\in \Sigma$ for every $x\in R$.
\end{enumerate}
We denote by $\mathrm{PC}(\Sigma)$ the set of primitive collections of $\Sigma$.
\end{definition}

Let $\Sigma_1$ and $\Sigma_2$ be smooth complete fans in $N_\R$. $\Sigma_1$ and $\Sigma_2$ are said to be {\emph{combinatorially equivalent}} if there exists a bijective map
$\psi:G_{\Sigma_1}\to G_{\Sigma_2}$ such that for any nonempty subset $R\subset G_{\Sigma_1}$, we have $\cone(R)\in \Sigma_1$ if and only if $\cone(\psi(R))\in \Sigma_2$.
Furthermore, $\mathrm{PC}(\Sigma_1)$ and $\mathrm{PC}(\Sigma_2)$ are {\emph{isomorphic}} if there exists a bijective map $\phi: G_{\Sigma_1}\to G_{\Sigma_2}$ which induces a well-defined
bijective map
\[
\phi_*:\mathrm{PC}(\Sigma_1) \longrightarrow \mathrm{PC}(\Sigma_2), \qquad R\longmapsto \phi(R).
\]
Let $P$ be a full dimensional convex polytope in $M_\R$, and let $\mathcal V(P)$ be the set of all vertices of $P$.
Here and hereafter, we assume unless otherwise stated that all polytopes are full dimensional convex polytopes.
Two convex polytopes $P_1$ and $P_2$ are {\emph{combinatorially equivalent}} if there exists a bijective map $\varphi:\mathcal V(P_1)\to\mathcal V(P_2)$ which induces a well-defined map
\begin{equation*}
\xymatrix@R=-1ex{
\varphi_*: \set{\text{faces of $P_1$}}\ar[r]& \set{\text{faces of $P_2$}}\\
\quad \! \din &\hspace{-0.5cm}\din\\
 F=\conv\set{\bs v_i|\bs v_i\in \mathcal V(P_1)} \ar@{|->}[r]&F'=\conv\set{\varphi(\bs v_i)|\varphi(\bs v_i)\in \mathcal V(P_2)}}
\end{equation*}
that preserves dimensions, intersections, and the face relation. For example, there is a seven dimensional non-symmetric reflexive K\"ahler-Einstein moment polytope $P\subset M_\R$ with $64$ vertices used in
\cite{OSY12}. This seven dimensional polytope $P$ is combinatorially equivalent to $\D_3\times(\D_1)^4$, where $\D_n$ denotes the standard $n$-simplex.

Finally, we recall that an $n$-dimensional convex polytope $P$ in $ M_\R$ is {\emph{Delzant}} if it satisfies the following conditions:
\begin{enumerate}
\item[(i)] At each vertex $p$ of $P$, there are exactly $n$ edges $l_{p,1}, \ldots, l_{p,n}$ of $P$ emanating from $p$.
\item[(ii)] Each edge $l_{p,k}$ in (i) is of the form $p+tu_k$ $(t\geqslant 0)$ with some primitive vector $u_k\in M$.
\item[(iii)] ~ For each vertex $p$, the primitive vectors $u_1, \ldots, u_n$ corresponding to the edges $l_{p,1}, \ldots, l_{p,n}$ generate the lattice $M$ over $\Z$.
\end{enumerate}
From these definitions, we readily obtain the following proposition.
\begin{proposition}[Proposition $2.1.6$ in \cite{Bat98}, Proposition $1.3.4$ in \cite{Sat02}]
Let $P_1$ and $P_2$ be lattice Delzant polytopes in $M_\R$. Let $\Sigma_{P_1}$ and $\Sigma_{P_2}$ be the associated normal fans in $N_{\R}$.
Then, the followings are equivalent.
\begin{enumerate}
\item[(a)] $P_1$ and $P_2$ are combinatorially equivalent.
\item[(b)] $\Sigma_{P_1}$ and $\Sigma_{P_2}$ are combinatorially equivalent.
\item[(c)] $\mathrm{PC}(\Sigma_{P_1})$ and $\mathrm{PC}(\Sigma_{P_2})$ are isomorphic.
\end{enumerate}
\end{proposition}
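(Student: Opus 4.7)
The plan is to establish the equivalences in a cycle or as two separate bi-implications (a)$\Leftrightarrow$(b) and (b)$\Leftrightarrow$(c), each exploiting the purely combinatorial nature of the data involved, together with the smoothness of $\Sigma_{P_i}$ guaranteed by the Delzant hypothesis.

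For (a)$\Leftrightarrow$(b), the core tool is the standard inclusion-reversing bijection between faces of $P_i$ and cones of $\Sigma_{P_i}$, under which a $k$-face $F$ of $P_i$ corresponds to the $(n-k)$-cone $\sigma_F$ spanned by the inward normals of the facets containing $F$. In particular, vertices of $P_i$ correspond to the maximal cones of $\Sigma_{P_i}$, while facets of $P_i$ correspond bijectively to elements of $G_{\Sigma_{P_i}}$. A combinatorial equivalence $\varphi:\mathcal V(P_1)\to \mathcal V(P_2)$ transports facets to facets (each facet is the common face of its vertices), so it induces a bijection $\psi:G_{\Sigma_{P_1}}\to G_{\Sigma_{P_2}}$ on primitive ray generators; the face-preservation of $\varphi$ then yields $\cone(R)\in\Sigma_{P_1}\Leftrightarrow \cone(\psi(R))\in\Sigma_{P_2}$. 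The converse uses the Delzant property to recover the polytope's face lattice from the fan.

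For (b)$\Leftrightarrow$(c), the implication (b)$\Rightarrow$(c) is immediate since the defining conditions for a primitive collection, $\cone(R)\notin\Sigma$ and $\cone(R\setminus\set{x})\in\Sigma$, are phrased purely in terms of which subsets of $G_\Sigma$ generate cones of $\Sigma$; a combinatorial equivalence therefore sends $\mathrm{PC}(\Sigma_1)$ bijectively onto $\mathrm{PC}(\Sigma_2)$. For (c)$\Rightarrow$(b), the key lemma I would record is: for any smooth complete fan $\Sigma$ and any subset $S\subset G_\Sigma$, one has $\cone(S)\in\Sigma$ if and only if $S$ contains no primitive collection. The ``only if'' follows because a subset of $G_\Sigma$ generating a cone of $\Sigma$ has every sub-cone still in $\Sigma$, forbidding a primitive sub-collection. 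The ``if'' follows by choosing, if $\cone(S)\notin\Sigma$, an inclusion-minimal $R\subset S$ with $\cone(R)\notin\Sigma$: minimality and the fact that singletons always span rays of $\Sigma$ force $R$ to be a primitive collection. Given this lemma, the bijection $\phi$ in (c) satisfies $\cone(S)\in\Sigma_1 \Leftrightarrow \cone(\phi(S))\in\Sigma_2$, i.e.\ $\phi$ is a combinatorial equivalence of fans.

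The main obstacle is the lemma used in (c)$\Rightarrow$(b), namely that $\Sigma$ is reconstructible from $\mathrm{PC}(\Sigma)$. This is where smoothness (equivalently, the Delzant hypothesis on $P_i$) is essential: in a smooth complete fan every cone is simplicial, so the minimality argument above pinpoints a primitive collection inside any non-cone subset. Without smoothness one would have to worry about relations among generators of a single cone, which would spoil the clean ``cones $=$ primitive-collection-free subsets'' dictionary. Once this reconstruction lemma is in hand, the remaining verifications are routine bookkeeping on the face/cone correspondence, and the three statements become tautologically equivalent.
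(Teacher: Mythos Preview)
The paper does not give its own proof of this proposition: it is stated with the remark ``From these definitions, we readily obtain the following proposition'' and attributed to \cite{Bat98} and \cite{Sat02}. So there is nothing in the paper to compare your argument against directly.

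That said, your argument is correct and is the standard one. The inclusion-reversing correspondence between faces of $P_i$ and cones of $\Sigma_{P_i}$ handles (a)$\Leftrightarrow$(b), and your key lemma for (c)$\Rightarrow$(b) --- that in a smooth complete fan a subset $S\subset G_\Sigma$ spans a cone of $\Sigma$ if and only if it contains no primitive collection --- is exactly Batyrev's reconstruction principle. Your minimality argument for the ``if'' direction is fine, and you are right that simpliciality (which follows from the Delzant hypothesis) is what makes the ``only if'' direction work: in a non-simplicial cone one can have a pair of ray generators whose span is not a face, producing a primitive collection inside the generator set of a cone of $\Sigma$. So the place where the hypothesis enters is precisely where you say it does.
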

For simplicity, we use $X_P$ to denote the projective toric variety associated with the normal fan $\Sigma_P$ of a Delzant polytope $P\subset M_\R$.

\section{Bott manifolds}\label{sec:Bott}
Historically, Bott and Samelson introduced a family of complex manifolds obtained as the total spaces of iterated $\P^1$-bundles over $\P^1$ \cite{BS58}. 
The notion of {\emph{Bott tower}} was first defined by Grossberg and Karshon in their study of complex geometric structure of Bott-Samelson varieties \cite{GK94}.
Each Bott tower is a (smooth) projective toric variety whose corresponding moment polytope $P\subset M_\R$ is combinatorially equivalent to a cube. 
In this section, we recall the toric description of Bott manifolds and see their basic properties. For further details, we refer the reader to Section $7.8$ in \cite{BP15} and \cite{CMS10}.

\subsection{Definition and basic properties}\label{sec:DefBott}
For a given $n\in \N$ with $n\geqslant 2$, we inductively construct complex manifolds $X_k$ for $k=1,2, \cdots , n$ as follows.
Let $X_0$ be a point $\mathrm{Spec\:} \C$, and let $X_1$ be the projective line $\P^1$.
For $k\geqslant 2$, we assume that $X_{k-1}$ is already determined and choose a holomorphic line bundle $N_k$ on $X_{k-1}$.
Then, we define $X_k$ as the total space of the $\P^1$-bundle over $X_{k-1}$ and denote it by
\begin{equation}\label{eq:stagek}
X_k:=\P_{X_{k-1}}(\mathcal O_{X_{k-1}}\oplus N_k)\stackrel{\pi_k}{\longrightarrow} X_{k-1}.
\end{equation}
\begin{definition}\rm
A {\emph{Bott tower}} of height $n$ is a sequence of $\P^1$-bundles 
\begin{equation}\label{eq:BottTower}
X_n \stackrel{\pi_n}{\longrightarrow} X_{n-1} \stackrel{\pi_{n-1}}{\longrightarrow} \cdots \stackrel{\pi_2}{\longrightarrow} X_1 \stackrel{\pi_1}{\longrightarrow} \mathrm{Spec\:} \C
\end{equation}
over complex manifolds, where $X_1=\P^1$ and $X_k$ for $2\leqslant k \leqslant n$ is the 
$ \P^1$-bundle defined in $\eqref{eq:stagek}$.

The last stage $X_n$ in $\eqref{eq:BottTower}$ is referred to as a stage $n$ {\emph{Bott manifold}} although it is also often called as the same name {\emph{Bott Tower}}.
\end{definition}
A Bott tower $X_n$ is {\emph{topologically trivial}} if each projection $\pi_k:X_k\to X_{k-1}$ gives a trivial $\P^1$-bundle.
In particular, such $X_n$ is diffeomorphic to the product of real two dimensional spheres $(S^2)^n$.
The (topological) {\emph{twist}} is the number of topologically nontrivial $\P^1$-bundles in the Bott tower $\eqref{eq:BottTower}$ that is well-defined due to the work of Choi and Suh \cite{CS11}.
In fact, we may have several Bott tower structures for a given Bott manifold (cf. Section $3.2$ in \cite{FY24}).
Hence it is not obvious whether the number of nontrivial $\P^1$-fibration $\pi_k: X_k\to X_{k-1}$ in the sequence is well-defined or not.
The well-definedness of the twist of a Bott manifold was proved in \cite{CS11}, Corollary $3.3$.

Let $(\C^2_*)^n$ be $n$ copies of $\C^2_*:=\C^2\setminus\set{0}$.
Then, one can construct a stage $n$ Bott manifold $X_n$ by taking a quotient of $(\C^2_*)^n$
by an algebraic torus $T_\C$. Alternatively $X_n$ is obtained as the symplectic quotient of $(S^3)^n$ with respect to the free $(S^1)^n$-action as we shall see below.
Remark that the former gives a complex geometric viewpoint of Bott manifolds, while the latter gives a symplectic geometric viewpoint of them.

Let $A$ be a lower triangular unipotent matrix of the form
\begin{equation}\label{eq:matrix}
A=\begin{pmatrix}
1      &   0      &  \cdots   & 0         &  0       \\
  a_2^1         & 1 &  \cdots    &  0     &      0   \\
     \vdots      &  \vdots      & \ddots &  \vdots      &    \vdots   \\
   a_{n-1}^1  &    a_{n-1}^2     &  \cdots      & 1 &   0     \\
      a_n^1      &   a_n^2     &  \cdots     &   a_n^{n-1}    &1
\end{pmatrix}
\end{equation}
with $a_j^i\in \Z$. The induced action $\lambda$ of $\bs t=(t_1,\ldots, t_n)\in T_\C$ on 
\[
(\bs z; \bs w)=((z_1, \ldots ,z_n);(w_1, \ldots ,w_n))\in (\C^2_*)^n
\]
is given by
\begin{equation}
\begin{split}\label{eq:action}
\xymatrix@R=-1ex{
\lambda:  T_\C\times (\C^2_*)^n  \ar[r]&(\C^2_*)^n\\
\din &\din\\
(\bs t, (\bs z; \bs w))\ar@{|->}[r]& \left( (t_1z_1, \ldots , t_nz_n);
\left( \left( \prod_{i=1}^nt_i^{a_1^i}\right) w_1, \ldots, \left( \prod_{i=1}^nt_i^{a_n^i}\right) w_n
 \right)
\right).
}
\end{split}
\end{equation}
For the unit sphere $S^3$ in $\C^2_*$, there is the induced action of $(\R_{>0})^n$ on $(\C^2_*)^n$ by $\eqref{eq:action}$,
which is transverse to $(S^3)^n$. Hence the orbits of this action are bijectively correspond to orbits of the induced free $(S^1)^n$-action on $(S^3)^n$.
Consequently, the geometric quotient $(\C^2_*)^n/T_\C$ gives a complex $n$-dimensional compact manifold which is denoted by $X_n(A)$ in some literature.
In this way, an $n$-dimensional Bott manifold can be constructed as a quotient of $(\C^2_*)^n$ by an algebraic torus.
In particular, we see that any Bott manifold is a smooth projective toric variety.

\subsection{A Bott manifold and its associated fan}\label{sec:BottPC}
For a given stage $n$ Bott manifold $X$, let $\Sigma$ be its associated polytopal fan. 
Recall that an {\emph{$n$-cross polytope}} is a smooth polytope which is dual to an $n$-cube.
According to Corollary $3.5$ in \cite{MP08}, $\Sigma$ is combinatorially equivalent to the fan consisting of the cones over the faces of an $n$-cross polytope.
This means
\begin{equation}\label{eq:PC}
{\mathrm{PC}}(\Sigma)=\set{\set{\bs u_1, \bs e_1},\set{\bs u_2, \bs e_2}, \cdots , \set{\bs u_{n}, \bs e_{n}}}.
\end{equation}
Furthermore, for the Bott tower with a matrix $A$ given in $\eqref{eq:matrix}$, we may assume that $\bs e_1, \ldots , \bs e_n$ are the standard basis of $\R^n$ and each $\bs u_i$ is written as
\begin{equation}\label{eq:PN}
\bs u_i=-\sum_{j=1}^na^i_j\bs e_j
\end{equation}
for $i=1, \ldots , n$. For later use, we denote $\set{\bs u_i, \bs e_i}$ as $\set{\bs v_{2i-1}, \bs v_{2i}}$ in $\eqref{eq:PC}.$
In particular, if a stage $n$ Bott manifold $X$ has twist zero, then we have $\bs v_{2i-1}+\bs v_{2i}= 0$ for all $1\leqslant i \leqslant n$.
As we have seen in $\eqref{eq:PN}$, each column vector in the matrix $\eqref{eq:matrix}$ corresponds to the primitive generator $\bs v_{2i-1}$ (i.e., $\bs u_i$) for $1\leqslant i \leqslant n$.
Summing up these arguments, we have the following proposition.
\begin{proposition}\label{prop:BottPC}
Let $X$ be a stage $n$ Bott manifold having twist $k$ with the associated fan $\Sigma \subset N_\R$.
Then, the primitive collection of $\Sigma$ has the form of
\[
{\mathrm{PC}}(\Sigma)=\set{\set{\bs v_1, \bs v_2},\set{\bs v_3, \bs v_4}, \cdots , \set{\bs v_{2n-1}, \bs v_{2n}}}, \qquad \rho_i=\cone(\bs v_i)\in \Sigma(1)
\]
for $1\leqslant i \leqslant 2n$. In particular, there exist integers 
\[
i_1<i_2< i_3<\cdots < i_{n-k} \qquad \text{with} \qquad\set{i_1, i_2, i_3, \cdots, i_{n-k}}\subset \set{1, 3,5,\cdots ,2n-1}
\]
satisfying $\bs v_{i_j}+\bs v_{i_j+1}=0$.
\end{proposition}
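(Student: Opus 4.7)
The plan is to assemble the facts already laid out in the preceding paragraphs of Section~\ref{sec:BottPC}. First I would invoke Corollary~3.5 of \cite{MP08}, which asserts that the polytopal fan $\Sigma$ of any stage $n$ Bott manifold is combinatorially equivalent to the fan consisting of the cones over the faces of the $n$-cross polytope. This is exactly the structural input that yields \eqref{eq:PC}: there are $n$ primitive collections of $\Sigma$, each a two-element set of the form $\{\bs u_i,\bs e_i\}$ with $\bs e_1,\ldots,\bs e_n$ the standard basis and $\bs u_i$ described by \eqref{eq:PN}. Relabeling via $\bs v_{2i-1}:=\bs u_i$ and $\bs v_{2i}:=\bs e_i$ for $1\leqslant i\leqslant n$ immediately produces the claimed form of $\mathrm{PC}(\Sigma)$, and the rays $\rho_i=\cone(\bs v_i)$ lie in $\Sigma(1)$ by construction, since each $\bs v_i$ is a primitive generator of a one-dimensional cone.

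For the twist statement, I would use the explicit description of the $\bs u_i$ in terms of the columns of the lower triangular unipotent matrix $A$ from \eqref{eq:matrix} via \eqref{eq:PN}. The $k$-th stage bundle $\pi_k:X_k\to X_{k-1}$ is topologically trivial precisely when $N_k\cong \mathcal{O}_{X_{k-1}}$, which from the torus action \eqref{eq:action} is equivalent to the $k$-th column of $A$ being the $k$-th standard basis vector, i.e., $a^k_j=0$ for all $j\neq k$. By \eqref{eq:PN}, this is in turn equivalent to $\bs u_k=-\bs e_k$, namely $\bs v_{2k-1}+\bs v_{2k}=0$. Since by Corollary~3.3 of \cite{CS11} the twist is a well-defined invariant of $X$ that counts precisely the nontrivial stages of the tower, there are exactly $n-k$ stages at which the bundle is trivial.

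To finish, I would let $s_1<s_2<\cdots<s_{n-k}$ enumerate those trivial stages and set $i_j:=2s_j-1$. Then $i_1<i_2<\cdots<i_{n-k}$ are $n-k$ odd integers in $\{1,3,5,\ldots,2n-1\}$, and each satisfies $\bs v_{i_j}+\bs v_{i_j+1}=0$ by the equivalence established in the previous paragraph, giving the final assertion.

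The main obstacle is mild and essentially bookkeeping rather than substantive: one must be careful to match the stage index $k$ of the Bott tower to the pair $(2k-1,2k)$ of primitive generators after relabeling, and to verify that what \eqref{eq:PN} encodes is indeed the topological triviality condition for the line bundle $N_k$ (which ultimately relies on the dictionary between a Bott tower structure and a lower triangular unipotent matrix $A$). All the deeper structural inputs, namely the cross-polytope combinatorics and the invariance of the twist, are imported from \cite{MP08} and \cite{CS11} respectively.
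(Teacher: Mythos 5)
Your proposal is correct and follows essentially the same route as the paper, whose ``proof'' of Proposition~\ref{prop:BottPC} is just the summary of the preceding discussion: Corollary~3.5 of \cite{MP08} for the cross-polytope combinatorics giving \eqref{eq:PC}, the column description \eqref{eq:PN} identifying a trivial stage of the tower with $\bs u_i=-\bs e_i$ (equivalently $\bs v_{2i-1}+\bs v_{2i}=0$), and Corollary~3.3 of \cite{CS11} for the well-definedness of the twist. Your enumeration $i_j=2s_j-1$ of the trivial stages merely makes explicit the bookkeeping the paper leaves implicit.
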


\section{The proof of Theorem $\ref{thm:main}$}\label{sec:Proof}
In this section, we prove the main result. Our proof consists of the following steps:
\begin{step}
For a stage $n$ Bott manifold $X$, we fix a K\"ahler class $\Omega$ of $X$.
Denoting by $P$ the corresponding moment polytope of $(X,\Omega)$, we see that $P$ is written in the form of $\eqref{eq:Polytope Pinf}$.
Then, the Futaki invariant of our targeted Bott manifold $(X,\Omega)$ is given by $\eqref{eq:DFheight}$.  
\item We prove that if the Futaki invariant of the vector field generated by $v=-x_n\p/\p x_n$ is identically zero,
where $x_n$ corresponds to the ``height" of $P$, then the volume of the slice
\[
Q_x=P\cap \set{x_n=x}
\]
is constant (Proposition \ref{prop:ConstFun}).
\end{step}
\begin{step}
We prove that if the Futaki invariant of the vector field generated by $v=-x_n\p/\p x_n$ is identically zero,
where $x_n$ corresponds to the ``height" of $P$, then the volume of the slice
\[
Q_x=P\cap \set{x_n=x}
\]
is constant (Proposition \ref{prop:ConstFun}).
\end{step}
\begin{step}
In a more general setting, looking at the projection $X\to \P^1$ for a smooth projective variety
and considering the lift of $\C^{\times}$-action on $\P^1$ to $X$, we consider the vector field $v$ generated by this $\C^{\times}$-action.
We can show that if the Futaki invariant of $v$ vanishes on the entire space of K\"ahler cone, then $X$ splits into $\P^1$ and another smooth projective variety $X'$. 
In the toric setting, using the Brunn-Minkowski inequality for convex polytopes, we prove it in Corollary \ref{cor:splitting}.
\end{step}
\begin{step}
For the splitting $X_P=X_Q\times \P^1$ in {\it{Step}} $3$, we see that the Futaki invariant of $X_P$ is proportional to the Futaki
invariant of the base manifold $X_Q$ in Lemma \ref{lem:Futaki}. By an inductive argument in the dimension of a Bott manifold,
we verify the assertion. 
\end{step}
Following the outline described in the above, we shall give a proof as follows.
Let $X$ be the stage $n$ Bott manifold associated with a smooth polytopal fan $\Sigma$.
Then, $\Sigma(1)$ consists of $2n$ rays such as
\[
\Sigma(1)=\set{\rho_1, \rho_2, \ldots, \rho_{2n-1}, \rho_{2n}},\,\rho_i={\mathrm{Cone\,}} (\bs v_{i}), 
\]
where $\bs v_{i}$ is the primitive generator of each ray $\rho_i$.
Since a Bott tower $X$ (of height $n$) has the trivial $\P^1$-bundle at the first stage, i.e., $X_1=\P^1$, we may assume
\begin{equation}\label{eq:P1_line}
	\bs v_{2n-1} + \bs v_{2n} =\bs 0, \qquad  \text{with} \quad \bs v_{2n}=\bs e_n.
\end{equation}
Let $D_i$ be the $T_\C$-invariant Cartier divisor on $X$ corresponding to $\rho_i\in \Sigma(1)$.
We consider a K\"ahler class $\Omega\in H^{1,1}(X,\R)$ given by
\[
\Omega=\sum_{i=1}^{2n}a_i[D_i], \quad a_i\in \R,
\]
where $[D_i]$ denotes the Poincar\'e dual of $D_i$.
Then, the corresponding moment polytope of  
$(X,\Omega)$ is (a part of) the pillar given by
\begin{equation}\label{eq:polytope_P}
	P=\set{\bs x \in M_\R| \braket{\bs x, \bs v_{i} }\geqslant -a_{i}, \, i=1, \ldots ,2n}
\end{equation}
with $\bs x=(x_1, \ldots ,x_n)$.

\subsection{Volume preserving constraints for each slice of $P$}\label{sec:DeriFut}
Through this subsection, we fix $a_i$ for each $1 \leqslant i \leqslant 2n-2$ in $\eqref{eq:polytope_P}$.
Let $P_{(-\infty, +\infty)}$ be the unbounded pillar defined by
\[
		P_{(-\infty, \, +\infty)}=\set{\bs x \in M_\R| \braket{\bs x, \bs v_{i} }\geqslant -a_{i}, \, i=1, \ldots ,2n-2}.
\]
We regard $a_{2n-1}$ and $a_{2n}$ as real parameters $s$ and $-t$ respectively.

Then, the polytope $P$ can be written as 
\begin{equation}\label{eq:Polytope Pinf}
	P_{[t, \, s]}
	=
	P_{(-\infty, \, +\infty)} 
	\cap 
	\{ t\leqslant x_{n} \leqslant s\}.
\end{equation}
Let us denote the slice of $P_{(-\infty,\, +\infty)}$ at $x_n=x$ by $Q_x$:
\begin{equation}\label{eq:PolytopeQ_x}
	Q_{x} = P_{(-\infty,\, +\infty)}\cap \{x_{n}=x\}.
\end{equation}
Taking a suitable parallel translation of $P_{(-\infty, +\infty)}$ along the $x_n$-axis, we may assume $t=0$ in $\eqref{eq:Polytope Pinf}$. Henceforth we shall take $t=0$ unless otherwise stated.

Let $\ell_i(\bs x)=\braket{\bs x, \bs v_i}+a_i$ be the defining affine function of the facet $F_i$ of $P$.
Let $dv=dx_1\wedge \cdots \wedge dx_n$ be the standard volume form of $M_\R$.
On each facet $F_i=\set{\bs x\in P| \ell_i(\bs x)=0}\subset \partial P$, we define the $(n-1)$-dimensional Lebesgue measure $d\sigma_i$ by
\begin{equation}\label{def:Bound_Mes}
dv=\pm d\sigma_i \wedge d\ell_i.
\end{equation}
Analogously, we consider the relative volume $\Vol (\partial Q_x)=\int_{\partial Q_x}d\sigma_{Q_x}$, where $d\sigma_{Q_x}$ is the $(n-2)$-dimensional Lebesgue measure 
defined as follows: let $d\sigma_i$ be the volume form of the facet $F_i$ defined in $\eqref{def:Bound_Mes}$. 
Then, the corresponding $(n-2)$-dimensional Lebesgue measure $d\sigma_{Q_x,i}$ on 
$\partial Q_x\cap F_i$ is defined to be 
\begin{equation}\label{def:Bound_Mes2}
d\sigma_{Q_x,i} \wedge dx_n=\pm d\sigma_i.
\end{equation} 
For a fixed $s>0$, let us consider a convex polytope given by
\begin{equation}\label{eq:SlicePoly}
P_{[0,s]}=P_{(-\infty,+\infty)}\cap\set{0\leqslant x_n \leqslant s},
\end{equation}
which has a pair of parallel facets $Q_s$ (the top facet) and $Q_0$ (the bottom facet).
Here and hereafter, we shall abbreviate the polytope $P_{[0,s]}$ by $P$.
The following lemma is an immediate consequence of the definition of $d\sigma_{Q_x,i}$ in $\eqref{def:Bound_Mes2}$. 
\begin{lemma}\label{lem:boundary}
Let $s$ be a fixed positive constant.
For any $0\leqslant u \leqslant s$, we consider 
the slice $Q_u$ of $P$ given in $\eqref{eq:PolytopeQ_x}$, so that each $Q_u$ is parallel to both $Q_0$ and $Q_s$.
Regarding $Q_u$ as a full-dimensional polytope in $\R^{n-1}$, we denote the standard volume form of $Q_u$ by $dv_{Q_u}$.
Let us define the functions $f(u)$ and $g(u)$ by
\[
f(u):=\Vol(Q_u)=\int_{Q_u}dv_{Q_u} \qquad \text{and} \qquad
g(u):=\Vol(\partial Q_u)=\int_{\partial Q_u}d\sigma_{Q_u}
\]
respectively. Then, the volume of $\partial P$, and the barycenter of $\partial P$ with respect to the $x_n$-axis are given as follows:
\begin{equation}\label{eq:boundary}
\begin{aligned}
\int_{\partial P} d\sigma&=\int_0^s g(u)\, du+f(0)+f(s), &\text{and} \\
\int_{\partial P} x_n \, d\sigma&=\int_0^s ug(u)\, du+sf(s).
\end{aligned}
\end{equation}
\end{lemma}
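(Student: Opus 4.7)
The plan is to decompose the boundary $\partial P$ of $P=P_{[0,s]}$ into three pieces and integrate on each piece separately: the bottom face $Q_0=P\cap\{x_n=0\}$, the top face $Q_s=P\cap\{x_n=s\}$, and the union of the $2n-2$ lateral facets $F_i\cap P$ for $i=1,\ldots,2n-2$, which arise from the facets of $P_{(-\infty,+\infty)}$. Since these lateral facets are defined by $\ell_i(\bs x)=0$ with $\bs v_i$ independent of the $\bs e_n$-direction (equivalently, each $F_i$ is a ``vertical'' facet parallel to the $x_n$-axis), they are swept out by their horizontal slices $\partial Q_u\cap F_i$ as $u$ ranges over $[0,s]$.

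First, I would treat the top and bottom faces directly: on $Q_0$ and $Q_s$, the induced boundary measure is the standard Lebesgue measure $dv_{Q_u}$ (up to sign), which contributes $f(0)$ and $f(s)$ to $\int_{\partial P}d\sigma$. For the moment integral, $x_n$ is constantly $0$ on $Q_0$ and constantly $s$ on $Q_s$, giving the terms $0$ and $sf(s)$ respectively.

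Next, for the lateral part, I would apply Fubini's theorem using the defining relation \eqref{def:Bound_Mes2}, namely $d\sigma_{Q_u,i}\wedge dx_n=\pm\, d\sigma_i$. Writing each lateral facet as a union of its slices,
\[
\int_{F_i\cap P}d\sigma_i=\int_0^s\!\!\int_{\partial Q_u\cap F_i}d\sigma_{Q_u,i}\,du,
\]
and summing over $i=1,\ldots,2n-2$ collapses the inner integral to $\int_{\partial Q_u}d\sigma_{Q_u}=g(u)$, by definition of the boundary measure on $Q_u$. This yields $\int_0^s g(u)\,du$ for the first identity. For the second, since $x_n=u$ on the slice at height $u$, the same Fubini argument gives $\sum_i\int_{F_i\cap P}x_n\,d\sigma_i=\int_0^s u\,g(u)\,du$. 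Combining with the top/bottom contributions establishes both formulas in \eqref{eq:boundary}.

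The only nontrivial point is a bookkeeping one: verifying that the signs in \eqref{def:Bound_Mes} and \eqref{def:Bound_Mes2} are coherent so that the unsigned boundary integral $\int_{\partial P}d\sigma$ decomposes additively with all positive signs. This follows because $d\sigma$ is defined on each facet as a positive Lebesgue measure, and the sign ambiguity in \eqref{def:Bound_Mes2} is absorbed when passing from signed forms to unsigned measures. I do not anticipate any deeper obstacle; the lemma is essentially Fubini's theorem applied to the explicit ``product-like'' structure of $P_{[0,s]}$ in the $x_n$-direction.
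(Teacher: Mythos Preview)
Your proof is correct and follows the same approach as the paper's: decompose $\partial P$ into the top face $Q_s$, the bottom face $Q_0$, and the lateral boundary, then apply Fubini via the measure identity $\eqref{def:Bound_Mes2}$.

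One minor correction: your parenthetical claim that the lateral normals $\bs v_i$ ($i\le 2n-2$) are ``independent of the $\bs e_n$-direction'' is false in general---the vectors $\bs u_i=\bs v_{2i-1}$ can have nonzero $\bs e_n$-component, and proving that they do not is precisely the content of Corollary~\ref{cor:splitting} later on. Fortunately this does not affect your argument: the fact that each lateral facet $F_i$ is swept out by its slices $\partial Q_u\cap F_i$, and the Fubini identity $\int_{F_i}d\sigma_i=\int_0^s\int_{\partial Q_u\cap F_i}d\sigma_{Q_u,i}\,du$, rest only on the definition $\eqref{def:Bound_Mes2}$ and not on verticality of $F_i$.
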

\begin{proof}
Firstly, we remark that the polytope $P=P_{[0,s]}$ is combinatorially equivalent to the product of $Q_{0}$ and $[0,s]$.
Thus, the boundary $\partial P$ of $P$ is also combinatorially equivalent to
\begin{equation}\label{eq:decompos}
\partial Q_{0}\times [0,s]\cup Q_{0}\times\set{0,s}.
\end{equation}
This implies that
\begin{align*}
\Vol(\partial P)&=\int_{\partial P}d\sigma
=\int_0^s\left(\int_{\partial Q_u}1\,d\sigma_{Q_u}\right)du+\restrict{\int_{Q_u}1\,dv_{Q_u}}{u=0}+\restrict{\int_{Q_u}1\,dv_{Q_u}}{u=s}\\
&=\int_0^s\Vol(\partial Q_u)\,du+\Vol(Q_0)+\Vol(Q_s)\\
&=\int_0^sg(u)\,du+f(0)+f(s).
\end{align*}
This proves the first equality of $\eqref{eq:boundary}$.

For the second equality of $\eqref{eq:boundary}$, we compute
\begin{align*}
\int_{\partial P}x_n\, d\sigma&=\int_0^su\left(\int_{\partial Q_u}1\,d\sigma_{Q_u} \right)du
+u\restrict{\int_{Q_u}1\,dv_{Q_u}}{u=0}+u\restrict{\int_{Q_u}1\,dv_{Q_u}}{u=s}\\
&=\int_0^sug(u)\,du+sf(s).
\end{align*}
Thus, the assertions are verified.
\end{proof} 
Using the terminology defined in $\eqref{eq:SlicePoly}$, we recall that the {\emph{Futaki invariant}} of the vector field generated by  $v=-x_{n}\p /\p x_{n}$ on
the toric K\"ahler manifold $\bigl(X_{P_{[0,s]}}, \Omega_{P_{[0,s]}} \bigr)$ is equal to
\begin{equation}\label{eq:DFheight}
F_{X_{P_{[0, \, s]}}}(v)
=\Vol (P_{[0, \, s]})\int_{\p P_{[0, \, s]}} x_{n}\, d\sigma -\Vol(\p P_{[0, \, s]})\int_{P_{[0, \, s]}} x_{n}\, dv.
\end{equation}
See, for example \cite{D02}.
\begin{proposition}\label{prop:ConstFun}
For a fixed $s>0$ with $\dim Q_0=\dim Q_s=n-1$, we assume that $F_{X_{P_{[0, \, x]}}}(v)=0$ for any $x\in[0, \, s]$, as long as $Q_x$ has dimension $n-1$.
Then, the volume $\Vol(Q_{x})$ is constant for any $x\in[0,s]$. 
\end{proposition}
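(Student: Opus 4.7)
My plan is to translate the family of Futaki-vanishing conditions into a single functional identity and then show, via the polynomial structure coming from the polytope combined with a Taylor expansion at $x=0$, that $f(u):=\Vol(Q_u)$ must be constant.

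First I would reformulate the hypothesis: using $\eqref{eq:DFheight}$ together with Lemma $\ref{lem:boundary}$, the condition $F_{X_{P_{[0,x]}}}(v)=0$ for all $x\in[0,s]$ is equivalent to the scalar identity
\[
I(x) := V(x) B(x) - S(x) M(x) \equiv 0 \quad \text{on } [0,s],
\]
where $V(x)=\int_0^x f\,du$, $M(x)=\int_0^x uf\,du$, $S(x)=\int_0^x g\,du+f(0)+f(x)$, and $B(x)=\int_0^x ug\,du+xf(x)$. A single integration by parts rewrites this more symmetrically as
\[
S(x)P(x) - V(x)Q(x) = x\,f(0)\,V(x),
\]
with $P(x)=\int_0^x V\,du$, $Q(x)=\int_0^x H\,du$, $H(x)=\int_0^x g\,du$. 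I would then exploit that $Q_u$ is a horizontal slice of a Bott polytope: the facet-defining affine functions $\langle x',v_i'\rangle \geqslant -(a_i+u v_i^n)$ depend linearly on $u$, so on any maximal subinterval of $[0,s]$ where the combinatorial type of $Q_u$ is constant, $f(u)$ is a polynomial in $u$ of degree $\leqslant n-1$ and $g(u)$ one of degree $\leqslant n-2$.

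Writing Taylor expansions $f(u)=\sum_{k=0}^{n-1}f_k u^k/k!$ and $g(u)=\sum_{k=0}^{n-2}g_k u^k/k!$ at $u=0$ (with $f_0>0$ since $\dim Q_0=n-1$), each coefficient of $x^m$ in $I(x)\equiv 0$ yields a polynomial equation in $\{f_k,g_k\}$. A direct calculation gives
\[
[x^3]\,I(x) = \tfrac{f_0 f_1}{3}, \qquad [x^4]\,I(x) = \tfrac{f_0(2 f_2+g_1)}{12},
\]
forcing $f_1=0$ and $g_1=-2f_2$. The plan is to iterate: combining further coefficient relations with the polytope-induced degree bounds $f_k=0$ for $k\geqslant n$ and $g_k=0$ for $k\geqslant n-1$, the system reduces to roughly $2n-2$ polynomial equations in the $2n-3$ unknowns $f_1,\dots,f_{n-1},g_1,\dots,g_{n-2}$, whose only polytope-compatible solution is the trivial one. (For $n=3$ the top coefficient of $I$ collapses to $f_2 g_0=0$, forcing $f_2=0$; for $n=4$ one obtains $25 f_2^2 = 9 f_3 g_0$ together with $f_2 f_3 = 0$, again yielding $f_2=f_3=0$.) Hence $f$ is constant on a right-neighborhood of $0$, and the same analysis applied at the starting point of each successive maximal subinterval of constant combinatorial type propagates constancy across all of $[0,s]$.

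The main obstacle is the inductive core of this polynomial elimination: one must verify in arbitrary dimension that the successive coefficients of $I(x)$ really do force $f_k=0$ for every $k\geqslant 1$. I expect the Brunn-Minkowski inequality---concavity of $f^{1/(n-1)}$ on $[0,s]$---to enter decisively here, both by pinning down sign information (e.g.\ $f_2\leqslant 0$ as a consequence of $f_1=0$) and by ruling out spurious nonconstant solutions of the polynomial system.
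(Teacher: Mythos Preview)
Your setup and the computation $[x^3]\,I(x)=\tfrac{1}{3}f_0f_1$, yielding $f'(0)=0$, are exactly what the paper does (the paper phrases it as the third derivative of $a(x)d(x)-b(x)c(x)$ at $x=0$ being $2f(0)f'(0)$). Where you diverge is in what comes next, and this is where your argument has a genuine gap.

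You propose to continue extracting higher Taylor coefficients of $I(x)$ at $x=0$ and to solve the resulting polynomial system in the $f_k,g_k$. You yourself flag this as the ``main obstacle,'' give only ad hoc verifications for $n=3,4$, and hope Brunn--Minkowski will rescue the induction. None of this is needed. The paper avoids the elimination entirely by a one-line translation trick: having shown $f'(0)=0$, translate $P_{(-\infty,+\infty)}$ along the $x_n$-axis so that any chosen point $x_0\in[0,s]$ becomes the new origin. The ambient hypothesis (Futaki vanishing for every K\"ahler class, hence for every admissible pair $(a_{2n-1},a_{2n})$) applies equally to the translated configuration, so the same third-derivative computation gives $f'(x_0)=0$. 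Since $x_0$ was arbitrary, $f$ is constant on $[0,s]$.

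Two further remarks. First, your integration-by-parts rewriting $S(x)P(x)-V(x)Q(x)=x\,f(0)\,V(x)$ is pleasant but plays no role once the translation argument is in hand. Second, Brunn--Minkowski does not enter this proposition at all; in the paper it is invoked only afterwards (Proposition~\ref{prop:congr}) to upgrade ``$\Vol(Q_x)$ constant'' to ``all $Q_x$ congruent.'' So your expectation that Brunn--Minkowski would close the gap in the coefficient elimination is misplaced: the gap closes by symmetry of the hypothesis, not by convexity.
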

\begin{proof}	
We use the same notation as in Lemma $\ref{lem:boundary}$. Using the equalities in $\eqref{eq:boundary}$, we set 
\begin{equation}\label{def:PolyFun}
\begin{aligned}
		a(x)
	&:=
		\Vol(P_{[0, \, x]})= \int^x_0 f(u)du,
	\\
		b(x)
	&:=
		\Vol(\p P_{[0, \, x]})
		=
		\int^x_0 g(u)du + f(0) + f(x),
	 \\
	 	c(x)
	 &:=
	 	\int_{P_{[0, \, x]}} x_n dv
	 	=
	 	\int^x_0 u f(u) du, \quad \text{and}
	 \\
	 	d(x)
	 &:=
	 	\int_{\p P_{[0, \, x]}} x_n d\sigma
	 	=
	 	\int^x_0 u g(u) du + xf(x).
\end{aligned}
\end{equation}
Then, one can see that $a(x), b(x), c(x)$ and $d(x)$ are all polynomial functions. In particular,
we have
\[
	F_{X_{P_{[0, \, x]}}}(v) = a(x) d(x) - b(x)c(x).
\]
\begin{lemma}\label{lem:abcd}
Let $a^{(m)}(x)$ (resp. $b^{(m)}(x), c^{(m)}(x), d^{(m)}(x)$) denote the $m$-th derivative of the function $a(x)$ (resp. $b(x), c(x), d(x)$). Then, we have the following:
\[
\begin{array}{lclcl}
			a(0)=0, &\quad &
		 a^{(m)}(0)= f^{(m-1)}(0), &\quad & \text{for}\quad m \geqslant 1,\\ \vspace{-0.35cm} \\
			b(0)=2f(0), &\quad &
		 b^{(m)}(0)= g^{(m-1)}(0)+ f^{(m)}(0), &\quad & \text{for}\quad  m\geqslant 1,\\ \vspace{-0.35cm} \\
			c(0)=c^{(1)}(0)=0, &\quad &
	       c^{(m)}(0)= (m-1)f^{(m-2)}(0), &\quad & \text{for}\quad m\geqslant 2, \\ \vspace{-0.35cm} \\ 
			d(0)=0, &\quad & 
			d^{(1)}(0)=f(0)=\Vol(Q_0), \\ \vspace{-0.35cm} \\ 
                        &\quad & d^{(m)}(0)= (m-1)g^{(m-2)}(0)+ mf^{(m-1)}(0), &\quad & \text{for}\quad m\geqslant 2.
\end{array}	
\]
\end{lemma}
\begin{proof}[Proof of Lemma $\ref{lem:abcd}$]
The values of $a(0), b(0), c(0)$ and $d(0)$ are obvious from $\eqref{def:PolyFun}$.

All other equalities follows from the direct computations using the Leibniz rule.
Firstly, we find that $a^{(m)}(x)=f^{(m-1)}(x)$ for a positive integer $m$ by calculating the derivatives of $a(x)$. This yields that
\begin{equation*}
a^{(m)}(0)= f^{(m-1)}(0).
\end{equation*}
Next we compute $b^{(m)}(0)$. The straightforward computation shows that
\begin{equation}\label{eq:b^m(s)}
b^{(m)}(x)=g^{(m-1)}(x)+f^{(m)}(x),
\end{equation}
for $m\geqslant 1$. Plugging $x=0$ into $\eqref{eq:b^m(s)}$, for $m\geqslant1$, we conclude that
\begin{equation*} 
b^{(m)}(0)=g^{(m-1)}(0)+f^{(m)}(0). 
\end{equation*}
Let us calculate the derivatives of $c(x)$. By Leibniz rule, we find that
\begin{align*}
c^{(m)}(x)=(xf(x))^{(m-1)}&=\sum_{i=0}^{m-1}{m-1 \choose i} x^{(i)}f^{(m-1-i)}(x) \\
&=xf^{(m-1)}(x)+(m-1)f^{(m-2)}(x).
\end{align*}
Thus, we have
\begin{equation*}
c^{(1)}(0)=0 \qquad \text{and} \qquad c^{(m)}(0)=(m-1)f^{(m-2)}(0) \quad  \text{for} \quad m\geqslant 2.
\end{equation*}
Finally, we compute the derivatives of $d(x)$. The direct computation shows that
\begin{align*}
d^{(m)}(x)&=(xg(x))^{(m-1)}+(xf(x))^{(m)}\\
&=\sum_{i=0}^{m-1}{m-1\choose i}x^{(i)}g^{(m-1-i)}(x)+\sum_{j=0}^{m}{m\choose j}x^{(j)}f^{(m-j)}(x)\\
&=xg^{(m-1)}(x)+(m-1)g^{(m-2)}(x)+xf^{(m)}(x)+mf^{(m-1)}(x).
\end{align*}
Consequently, we have
\begin{align*}
d^{(1)}(0)&=f(0)=\Vol(Q_0), \quad \text{and} \qquad \\
d^{(m)}(0)&=(m-1)g^{(m-2)}(0)+mf^{(m-1)}(0),
\end{align*}
for $m\geqslant 2$.
\end{proof}
\noindent
By Lemma \ref{lem:abcd}, we find 
\begin{align}\label{eq:3rdDeriv}
		\begin{split}
\frac{d^3}{d x^3}F_{X_{P_{[0, \, x]}}}(v)\bigg|_{x=0} 
	&=
		a^{(3)}(0)d(0)+3a^{(2)}(0)d^{(1)}(0)
		+3a^{(1)}(0)d^{(2)}(0)+a(0)d^{(3)}(0)
	\\
	&
		-b^{(3)}(0)c(0)-3b^{(2)}(0)c^{(1)}(0)
		-3b^{(1)}(0)c^{(2)}(0)-b(0)c^{(3)}(0)
	\\
	&=
		 2 f'(0)f(0).
		 \end{split}
\end{align}
Since $f(0)>0$ and $F_{X_{P_{[0, \, x]}}}(v)$ is identically $0$ by our assumption, we have $f'(0)=0$.
By taking an appropriate parallel translation of $P_{(-\infty,\, +\infty)}$ along the $x_n$-axis again if necessary, we conclude that $f'(x)$ is identically $0$ for arbitrary $x\in[0,\, s]$.
Thus, the volume $\Vol(Q_{x})$ is constant.
\end{proof}

\subsection{The Brunn-Minkowski inequality for convex polytopes}\label{sec:BM_Ineq}

In this subsection, we prove that any two slices $Q_x$ and $Q_{x'}$ for $0\leqslant x,x'\leqslant s$ are congruent. 
For this, we apply the Brunn-Minkowski inequality which estimates the (Euclidean) volume $\Vol(C+D)$ of the Minkowski sum
\[
C+D=\Set{\bs x+\bs y| \bs x \in C, ~ \bs y\in D }
\]
for two convex bodies $C$ and $D$. We refer the reader to \cite{Gr07} for more detail. 

A set $C$ in $\R^n$ is said to be {\emph{convex}} if it satisfies
\[
(1-\lambda)\bs x+\lambda \bs y \in C
\]
for any $\bs x, \bs y\in C$ and $0\leqslant \lambda \leqslant 1$. We call a compact convex set a {\emph{convex body}}.
A convex body $C$ is {\emph{proper}} if its interior is non-empty, and {\emph{improper}} otherwise.
Let $\mathcal C(\R^n)$ be the space of all convex bodies in $\R^n$. We say that two convex bodies $C,D \in \mathcal{C}(\R^n)$ are {\emph{homothetic}} to each other, 
if there exist $r>0$ and $\bs a\in \R^n$ such that
\[
C=rD+\bs a=\set{\bs x\in \R^n | \bs x=r\bs y+ \bs a, ~~  \bs y\in D}.
\]
Considering the Minkowski sum 
\[
(1-\lambda)C+\lambda D \qquad \text{in~~~}~~~ \R^n
\]
for $C, D\in \mathcal{C}(\R^n)$ and $0\leqslant \lambda \leqslant 1$, we introduce the function $\Psi$ defined by
\begin{equation}\label{def:Vol_fun}
\xymatrix@R=-1ex{
\Psi: [0,1] \ar[r]&\R\\
\qquad  \!\! \din &\hspace{0.0cm}\din\\
 \qquad\!\! \lambda  \ar@{|->}[r]&\Vol\bigl((1-\lambda)C+\lambda D \bigr)^{\frac{1}{n}}.}
\end{equation}
In order to prove Proposition $\ref{prop:congr}$ below, we need the following result. See Theorem $8.3$ in \cite{Gr07} for the proof.
\begin{theorem}[The Brunn-Minkowski inequality]\label{thm:BM_Ineq}
Let $\Psi$ be the function defined in $\eqref{def:Vol_fun}$. If $\Psi$ is \emph{not} strictly concave, then either one of the following conditions is satisfied:
\begin{enumerate}
\item[(i)] $C$ or $D$ is a point.
\item[(ii)] Both $C$ and $D$ are improper, and lie in parallel hyperplanes.
\item[(iii)] Both $C$ and $D $ are proper, and they are homothetic to each other.
\end{enumerate}
\end{theorem}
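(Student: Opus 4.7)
The plan is to first establish the concavity of $\Psi$ (which is the content of the Brunn-Minkowski inequality itself) and then analyze what the failure of strict concavity imposes on the pair $(C,D)$. For concavity, I would apply the Pr\'ekopa-Leindler functional inequality to the indicator functions $\chi_C,\chi_D$, using $\chi_{(1-\lambda)C+\lambda D}((1-\lambda)\bs x+\lambda \bs y)\geqslant \chi_C(\bs x)^{1-\lambda}\chi_D(\bs y)^{\lambda}$. This yields the multiplicative form $\Vol((1-\lambda)C+\lambda D)\geqslant \Vol(C)^{1-\lambda}\Vol(D)^{\lambda}$, and a standard $n$-homogeneous rescaling $C\mapsto C/\Vol(C)^{1/n}$, $D\mapsto D/\Vol(D)^{1/n}$ upgrades it to the additive form $\Psi(\lambda)\geqslant (1-\lambda)\Vol(C)^{1/n}+\lambda\Vol(D)^{1/n}$, so $\Psi$ is concave on $[0,1]$.

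Suppose now $\Psi$ fails strict concavity. Then $\Psi$ is affine on some subinterval $[\lambda_0,\lambda_1]\subset[0,1]$, and equality in Brunn-Minkowski holds at an intermediate parameter $\lambda^{*}\in(0,1)$. I would split into three cases according to the dimensions of $C$ and $D$. If $C=\set{\bs c}$ is a singleton, then $(1-\lambda)C+\lambda D$ is a translated dilation of $D$ with volume $\lambda^n\Vol(D)$, so $\Psi(\lambda)=\lambda\Vol(D)^{1/n}$ is linear, giving case (i). If both $C,D$ are improper with linear parts $U_C,U_D$ of their affine hulls, then $(1-\lambda)C+\lambda D$ is contained in an affine translate of $U_C+U_D$. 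When $\dim(U_C+U_D)<n$ the volume vanishes identically, $\Psi\equiv 0$, and the affine hulls of $C$ and $D$ lie in parallel hyperplanes, giving case (ii). When $\dim(U_C+U_D)=n$ but both are improper, a direct product-type computation (for instance, intervals on perpendicular axes in $\R^2$ produce $\Psi(\lambda)\propto\sqrt{(1-\lambda)\lambda}$) shows $\Psi$ is strictly concave, contradicting our hypothesis.

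The main case is (iii), where $C,D$ are both proper with equality in Brunn-Minkowski at some $\lambda^{*}\in(0,1)$. Here I would proceed by induction on $n$ using Steiner symmetrization. For each unit direction $\bs u$, slice both bodies by hyperplanes orthogonal to $\bs u$: let $C_t,D_t\subset\R^{n-1}$ denote the cross-sections at height $t$. Fubini combined with $(n-1)$-dimensional Brunn-Minkowski applied slicewise, together with the one-dimensional Brunn-Minkowski governing how the heights combine, shows that equality in the $n$-dimensional statement forces simultaneous equality both in the slicewise $(n-1)$-dimensional inequality and in the integrated one-dimensional inequality. By the inductive hypothesis, the nontrivial slices of $C$ and $D$ contributing to a common slice of $(1-\lambda^{*})C+\lambda^{*}D$ must be homothetic with ratio $\Vol(C)^{1/n}:\Vol(D)^{1/n}$, and the one-dimensional equality pins down the translation relating them. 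Varying $\bs u$ and matching the translations via the support functions $h_C,h_D$ yields a single global homothety.

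The main obstacle will be this last step of Case (iii): converting slicewise rigidity valid on every line into a single coherent homothety of $\R^n$. This requires carefully tracking how the translation vectors depend on $\bs u$, verifying that the function $\bs u\mapsto h_C(\bs u)-r\,h_D(\bs u)$ is linear for the appropriate scalar $r=(\Vol(C)/\Vol(D))^{1/n}$, and handling the base case $n=1$ where all proper convex bodies are intervals and are trivially homothetic. Alternatively, one can avoid the inductive matching by invoking the equality case of Pr\'ekopa-Leindler directly, at the cost of a more technical functional argument.
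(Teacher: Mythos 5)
First, a point of comparison: the paper does not prove this statement at all --- it is quoted from Gruber's book (Theorem $8.3$ of \cite{Gr07}) and used as a black box, so there is no in-paper argument to measure yours against. Judged on its own, your outline follows one of the standard routes (Pr\'ekopa--Leindler for concavity, then induction on dimension via slicing for the equality case), but as a proof it has genuine gaps, and they sit exactly where the content of the theorem lives.

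Concretely: (1) the case analysis is not exhaustive --- you treat ``one body is a point'', ``both improper'', and ``both proper'', but not the mixed case where one body is proper and the other is improper of positive dimension; the theorem asserts strict concavity there and that requires an argument, not an omission. Similarly, the sub-case ``both improper with $U_C+U_D=\R^n$'' is dispatched only by a two-dimensional example of perpendicular segments, which does not prove strict concavity in general. (2) The heart of the theorem is the rigidity in case (iii), and that is precisely the step you defer: upgrading slicewise homothety, valid separately for each direction $\bs u$, to a single global homothety, i.e.\ showing that $\bs u\mapsto h_C(\bs u)-r\,h_D(\bs u)$ with $r=(\Vol(C)/\Vol(D))^{1/(n-1)}$ (in the ambient dimension of the slices) is the support function of a point. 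Every known proof does its real work here --- the Hadwiger--Ohmann brick decomposition, the equality case of Pr\'ekopa--Leindler, or mass transport --- and your sketch contains no such argument. (3) A smaller issue: the additive inequality only gives $\Psi(\lambda)\geqslant(1-\lambda)\Psi(0)+\lambda\Psi(1)$; concavity on all of $[0,1]$ additionally uses that $(1-\mu)\bigl((1-\lambda_0)C+\lambda_0D\bigr)+\mu\bigl((1-\lambda_1)C+\lambda_1D\bigr)=(1-\lambda)C+\lambda D$ for $\lambda=(1-\mu)\lambda_0+\mu\lambda_1$, which relies on the convexity of $C$ and $D$. Since the paper needs only the statement, citing \cite{Gr07} (as it does) is the right move; a self-contained proof would have to fill in all three items above.
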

We state the key result for proving Theorem $\ref{thm:main}$.
\begin{proposition}\label{prop:congr}
Let us take any two distinct $u_1$ and $u_2$ in $[0, s]$ with $u_1<u_2$.
For any $x$ between $u_1$ and $u_2$, let $Q_x$ be the slice of $P_{(-\infty,\,+\infty)}$ defined in $\eqref{eq:PolytopeQ_x}$.
If $\Vol(Q_x)$ is constant for all $x\in [u_1, u_2]$,
then $Q_{u_1}$ and $Q_{u_2}$ are congruent.
\end{proposition}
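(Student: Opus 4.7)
The plan is to combine convexity of the pillar $P_{(-\infty,+\infty)}$ with the Brunn--Minkowski inequality (Theorem~\ref{thm:BM_Ineq}) applied to the two slices $Q_{u_1}$ and $Q_{u_2}$, regarded as full-dimensional convex bodies in the hyperplane $\R^{n-1}$.

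First I would record the following consequence of convexity: for any $\lambda\in[0,1]$, the Minkowski sum satisfies
\begin{equation*}
(1-\lambda)Q_{u_1}+\lambda Q_{u_2}\subseteq Q_{(1-\lambda)u_1+\lambda u_2}.
\end{equation*}
Indeed, if $\bs y\in Q_{u_1}$ and $\bs z\in Q_{u_2}$, then $(1-\lambda)\bs y+\lambda\bs z$ lies in $P_{(-\infty,+\infty)}$ by convexity, and its $x_n$-coordinate equals $(1-\lambda)u_1+\lambda u_2$. Writing $V:=\Vol(Q_x)$ (which is constant on $[u_1,u_2]$ by hypothesis), this gives the upper bound
\begin{equation*}
\Vol\bigl((1-\lambda)Q_{u_1}+\lambda Q_{u_2}\bigr)\leqslant V.
\end{equation*}

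Next I would apply the classical Brunn--Minkowski inequality in $\R^{n-1}$ to obtain the matching lower bound
\begin{equation*}
\Vol\bigl((1-\lambda)Q_{u_1}+\lambda Q_{u_2}\bigr)^{1/(n-1)}\geqslant (1-\lambda)V^{1/(n-1)}+\lambda V^{1/(n-1)}=V^{1/(n-1)}.
\end{equation*}
The two inequalities force equality throughout, so the function
\begin{equation*}
\Psi(\lambda):=\Vol\bigl((1-\lambda)Q_{u_1}+\lambda Q_{u_2}\bigr)^{1/(n-1)}
\end{equation*}
is identically $V^{1/(n-1)}$ on $[0,1]$, hence in particular not strictly concave.

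I would then invoke Theorem~\ref{thm:BM_Ineq} to conclude the trichotomy (i)--(iii). Cases (i) and (ii) are immediately excluded because $Q_{u_1}$ and $Q_{u_2}$ are proper $(n-1)$-dimensional convex bodies lying in the same hyperplane $\{x_n=u_1\}$ and $\{x_n=u_2\}$ of $\R^{n-1}$ (after the obvious identification), both with positive $(n-1)$-volume $V>0$, so neither is a point and they are not improper. Hence case (iii) applies, giving constants $r>0$ and $\bs a\in\R^{n-1}$ with $Q_{u_2}=rQ_{u_1}+\bs a$. Taking volumes yields $V=r^{n-1}V$, so $r=1$ and $Q_{u_2}$ is a translate of $Q_{u_1}$, which is exactly the claimed congruence.

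The step that actually carries the geometric content is the combination of the convexity upper bound with the Brunn--Minkowski lower bound to force equality; the rest is bookkeeping. The only mild subtlety I would take care to justify is the full-dimensionality of the intermediate slices $Q_x$ for $x\in[u_1,u_2]$, which is needed so that cases (i) and (ii) of Theorem~\ref{thm:BM_Ineq} are genuinely ruled out; this however follows from the hypothesis $\Vol(Q_x)=V>0$ that is implicit in Proposition~\ref{prop:ConstFun}.
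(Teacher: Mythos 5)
Your proof is correct and takes essentially the same route as the paper: apply the equality case of the Brunn--Minkowski theorem (Theorem~\ref{thm:BM_Ineq}) to the two slices viewed as proper convex bodies in $\R^{n-1}$, then use the equal volumes to force the homothety ratio $r=1$, i.e.\ a translation. The only difference is in how the constancy of $\Psi$ is obtained: the paper simply identifies $(1-\lambda)Q_{u_1}+\lambda Q_{u_2}$ with the slice $Q_{(1-\lambda)u_1+\lambda u_2}$, whereas you use only the convexity inclusion $(1-\lambda)Q_{u_1}+\lambda Q_{u_2}\subseteq Q_{(1-\lambda)u_1+\lambda u_2}$ together with the Brunn--Minkowski lower bound, which is a slightly more careful justification of the same step.
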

\begin{proof}
Denoting $C:=Q_{u_1}$ and $D:=Q_{u_2}$, we remark that the Minkowski sum $(1-\lambda)C+\lambda D$ is the slice $Q_{(1-\lambda)u_1+\lambda u_2}$.
Let us define $\widetilde{Q}_\lambda:=(1-\lambda)C+\lambda D$.
Then, 
\[
\Vol(\widetilde{Q}_\lambda)=\int_{\widetilde{Q}_\lambda} dx_1\wedge \cdots \wedge dx_{n-1}
\]
is a constant function in $\lambda$ by assumption. 
Hence, the function
\[
\Psi:[0,1] \longrightarrow \R, \qquad \lambda \longmapsto \Vol(\widetilde{Q}_\lambda)^{\frac{1}{n-1}}
\]
is not strictly concave. 
If we regard each $\widetilde{Q}_\lambda$ as a convex polytope in the hyperplane $\set{x_n=\lambda}\cong\R^{n-1}$,
we note that
\begin{enumerate}
\item[(i)] neither $C$ nor $D$ is a point, and
\item[(ii)] both $C$ and $D$ are proper convex polytopes in $\R^{n-1}$, 
\end{enumerate}
because $\dim \widetilde{Q}_\lambda=n-1$ in $\R^{n-1}$ for any $\lambda$. 
By applying Theorem $\ref{thm:BM_Ineq}$
into $C$ and $D$ in $\R^{n-1}$, we conclude that $C$ and $D$ are homethetic, i.e., there exist some $r>0$ and $\bs a\in \R^{n-1}$ such that
\[
C=rD+\bs a \quad \Leftrightarrow \quad Q_{u_1}=rQ_{u_2}+\bs a.
\]
From this, we find 
\begin{equation}\label{eq:homothe}
\Vol(Q_{u_1})=r^{n-1}\Vol(Q_{u_2}).
\end{equation}
Since $\Vol(Q_{u_1})=\Vol(Q_{u_2})$, 
we conclude $r=1$. Consequently, $Q_{u_1}$ and $Q_{u_2}$ are congruent.
\end{proof}

\begin{corollary}\label{cor:splitting}
If the Futaki invariant $F_{X_{P_{[0, \, s]}}}(v)$ in $\eqref{eq:DFheight}$ vanishes for any $ s>0$, then our stage $n$ Bott manifold $X$ splits into $X' \times \P^{1}$, where 
$X'$ is the $(n-1)$-dimensional smooth toric variety whose associated fan $\Sigma'$ has 
the primitive collection
\begin{align*}
	{\mathrm{PC}}(\Sigma')&=\set{\set{\bs v_1, \bs v_2},\set{\bs v_3, \bs v_4}, \cdots , \set{\bs v_{2n-3}, \bs v_{2n-2}}}\\
	&=\Set{\set{\bs u_1, \bs e_1}, \set{\bs u_2, \bs e_2}, \cdots , \set{\bs u_{n-1}, \bs e_{n-1}}}.
\end{align*}
Moreover, the Futaki invariant on the base manifold $X'$ for any K\"ahler class also vanishes, whenever the Futaki invariants on $X$ for all K\"ahler classes vanish.
\end{corollary}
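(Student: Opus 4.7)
The plan is to combine Propositions \ref{prop:ConstFun} and \ref{prop:congr} with a facet-by-facet analysis of $P = P_{[0,s]}$ to extract rigid constraints on the Bott data encoded by the matrix $A$. First, applying Proposition \ref{prop:ConstFun} to the standing hypothesis yields $f(x) = \Vol(Q_x)$ constant on $[0, s]$; then Proposition \ref{prop:congr} (whose proof actually produces homothety with ratio $r=1$ via the volume-preserving relation \eqref{eq:homothe}) gives $Q_x = Q_0 + \bs a(x)$ for some translation vector $\bs a(x) \in \R^{n-1}$ depending on $x$.

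The key rigidity step is to show $\bs a(x) \equiv \bs 0$. By Proposition \ref{prop:BottPC}, each generator $\bs v_{2i} = \bs e_i$ with $i = 1,\ldots, n-1$ has vanishing $\bs e_n$-component, so the half-space $\braket{\bs y, \bs e_i} \geq -a_{2i}$ cuts out the same ``stationary'' facet $\{y_i = -a_{2i}\}$ on every slice $Q_x$, independent of $x$. Under the translation $Q_x = Q_0 + \bs a(x)$ this facet is sent to $\{y_i = -a_{2i} + \bs a(x)\cdot \bs e_i\}$; matching with the actual facet of $Q_x$ forces $\bs a(x)\cdot \bs e_i = 0$ for each $i = 1,\ldots, n-1$, so $\bs a(x) = \bs 0$ and $Q_x = Q_0$ as subsets of $\R^{n-1}$. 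Matching the remaining ``tilted'' facets $\{y_i + \sum_{i<j<n} a_j^i y_j \leq a_{2i-1} - a_n^i x\}$ of $Q_x$ with those of $Q_0$ then forces $a_n^i x = 0$ for all $x \in (0,s]$, hence $a_n^i = 0$ for $i = 1,\ldots, n-1$. Consequently $\bs v_1,\ldots,\bs v_{2n-2}$ all lie in the hyperplane $\{x_n = 0\}$, the fan $\Sigma$ splits as $\Sigma' \times \Sigma_{\P^1}$ with $\Sigma_{\P^1}$ generated by $\pm \bs e_n$, and $X \cong X' \times \P^1$ with the claimed $\mathrm{PC}(\Sigma')$.

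For the ``moreover'' statement, take any K\"ahler class on $X'$ with polytope $Q$ and any $s > 0$; the product polytope $P = Q \times [0,s]$ represents a K\"ahler class on $X$. For a toric vector field $v' = -x_i\,\partial/\partial x_i$ with $i < n$, direct integration using $\Vol(P) = s\,\Vol(Q)$, $\int_P x_i\,dv = s\int_Q x_i\,dv_Q$, $\int_{\partial P} x_i\,d\sigma = s\int_{\partial Q} x_i\,d\sigma_Q + 2\int_Q x_i\,dv_Q$, and $\Vol(\partial P) = s\,\Vol(\partial Q) + 2\,\Vol(Q)$ in formula \eqref{eq:DFheight} yields, after cancellation, $F_{X}(v') = s^2\,F_{X'}(v')$. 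Since $F_{X}$ vanishes on every K\"ahler class of $X$, we conclude $F_{X'}(v') = 0$ for every toric vector field and every K\"ahler class on $X'$, which suffices by the standard fact that the Futaki character of a toric manifold is determined by its restriction to the torus Lie algebra.

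The main obstacle will be the rigidity step: Proposition \ref{prop:congr} by itself yields only a translation of slices, and one must exploit the Bott-specific axis-aligned facets cut out by $\bs v_{2i} = \bs e_i$ to pin the translation to zero and then read off the arithmetic constraints $a_n^i = 0$. Without this asymmetry the Brunn--Minkowski input would merely exhibit $P$ as a sheared prism rather than a genuine product.
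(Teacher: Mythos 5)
Your proposal is correct and follows essentially the same route as the paper: Propositions \ref{prop:ConstFun} and \ref{prop:congr} give that the slices $Q_x$ are translates of one another, the polytope is then recognized as the product $Q\times[0,s]$ (so $a_n^i=0$ and $X\cong X'\times\P^1$), and your computation $F_X(v')=s^2F_{X'}(v')$ is exactly the paper's Lemma \ref{lem:Futaki}. Your explicit rigidity step—pinning the translation $\bs a(x)$ to zero using the stationary facets with normals $\bs e_i\perp\bs e_n$ and then reading off $a_n^i=0$ from the tilted facets—is a welcome elaboration of a point the paper passes over when it asserts $P=Q\times[0,a]$ directly from congruence of the slices.
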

\begin{proof}
Let $P$ be the moment polytope of a toric K\"ahler manifold $(X_{P_{[0,s]}}, \Omega_{P_{[0,s]}})$.
By Proposition $\ref{prop:congr}$, the moment polytope $P$ is a quadrangular prism with the bottom
\[
Q=\set{\bs x\in M_\R| x_n=0,  \braket{\bs x, \bs v_i}\geqslant -a_i, ~ i=1, \ldots , 2n-2}.
\]
By taking an appropriate parallel translation of $P$ along the $x_n$-axis if necessary, we assume that $a_{2n-1}=0$ and $a_{2n}=a$ for some constant $a>0$. 
Since $P$ is a quadrangular prism with the bottom facet $Q$,
we have the splitting of $P$ such that
\begin{equation}\label{eq:split}
P=Q\times [0, a].
\end{equation}
The first statement follows from the fact that $\Sigma'$ is the normal fan of $Q$. 

The second assertion is a consequence of Lemma $\ref{lem:Futaki}$. 
We remark that the splitting $\eqref{eq:split}$ implies that the associated toric manifold $X_Q$ is the torus invariant toric divisor of $X_P$. Then, we have the following:
\begin{lemma}\label{lem:Futaki}
For each $i=1, \ldots, n$, let $F_{X_P}(x_i)$ be the Futaki invariant on $(X_P, \Omega_P)$ given by
\begin{equation}\label{eq:Futaki_X_P}
F_{X_P}(x_i):=\Vol(P)\int_{\partial P}x_i\,d\sigma-\Vol(\partial P)\int_P x_i\,dv.
\end{equation}
Then, $F_{X_P}(x_i)$ is proportional to the Futaki invariant $F_{X_Q}(x_i)$ on the base manifold $(X_Q,\Omega_Q)$, for each $i=1,\ldots, n-1$.
\end{lemma}
Hence, the second assertion of Corollary \ref{cor:splitting} is verified.
\end{proof}
\begin{proof}[Proof of Lemma $\ref{lem:Futaki}$]
Let $\bs x=(x_1,\ldots ,x_n)$ be the coordinate function of $M_\R$, and let $\widetilde {\bs x}=(x_1, \ldots, x_{n-1})$ be its projection. We note that the splitting $\eqref{eq:split}$ yields that the boundary of $P$ can be written as
\begin{equation}\label{eq:Bound_Split}
\partial P=\partial Q\times [0, a]\cup Q\times \Set{0, a}.
\end{equation}
We also remark that two measures on $\partial Q$, that is,
\begin{itemize}
\item the measure given by the restriction of $d\sigma$ (on $\partial P$) to $\partial Q$, and
\item the induced measure on $\partial Q$ when we regard $Q$ as a full dimensional polytope in the hyperplane $\set{x_n=c}$,
where $c$ is some constant,
\end{itemize}
coincide with each other. Thus, by $\eqref{eq:split}$ and $\eqref{eq:Bound_Split}$, we have
\[
\Vol(P)=a\Vol(Q) \qquad \text{and} \qquad \Vol(\partial P)=a\Vol(\partial Q)+2\Vol(Q).
\]
Furthermore, for $i=1, \ldots, n-1$, we find
\begin{align*}
\int_Px_i\,dv&=a\int_Q x_i\,dv_Q, \qquad  \hspace{2.2cm} \int_Px_n \,dv=\frac{a^2}{2}\Vol(Q),\\
\int_{\partial P}x_i\,d\sigma&=a\int_{\partial Q}x_i\,d\sigma_Q+2\int_Qx_i\,dv_Q, \quad \text{and}\\
\int_{\partial P}x_n\, d\sigma&=\frac{a^2}{2}\Vol(\partial Q)+a\Vol(Q)
\end{align*}
respectively. Plugging these values into $\eqref{eq:Futaki_X_P}$, we see that
\begin{align*}
F_P(x_i)&=a^2\left(\Vol(Q)\int_Q x_i\,d\sigma_Q -\Vol(\partial Q)\int_Qx_i\, dv_Q\right)\\
&=a^2F_Q(x_i)
\end{align*}
for $i=1,\ldots, n-1$. 
\end{proof}
\begin{remark}\label{rem:splitting}\rm
Moreover, we can see a more general result of Corollary \ref{cor:splitting} as follows.
Let $X\to Y$ be a fibration for smooth projective varieties $X$ and $Y$.
Let $L_X$ and $L_Y$ be very ample line bundles on $X$ and $Y$ respectively.
If a vector field $v$ on $Y$ lifts to $\tilde v$ on $X$, then the Futaki invariant $F_{L_X+jL_Y}(\tilde v)$ of $\tilde v$ with respect to the K\"ahler class $L_X+jL_Y$ for $j$ sufficiently large has the leading order term given by the Futaki invariant $F_{L_Y}(v)$ of $v$ with respect to $L_Y$. 
This yields that if the Futaki invariant of $\tilde v$ vanishes identically on the K\"ahler cone of $X$, then the Futaki invariant of $ v$ also vanishes identically on the K\"ahler cone of $Y$.
\end{remark}

\subsection{The induction on the dimension of an SCD Bott manifold}\label{sec:induction}
Now we complete the proof of Theorem $\ref{thm:main}$ by an inductive argument in $n$. 
Assume that $X$ is an $n$-dimensional Bott manifold with vanishing Futaki invariants for all K\"ahler classes.
By Corollary $\ref{cor:splitting}$, we see that $\bs u_1, \ldots , \bs u_{n-1}$ are perpendicular to $\bs e_n$, for primitive generators in $\eqref{eq:PC}$. Hence, we have
\begin{equation}\label{eq:Perp}
\braket{\bs u_i, \bs e_n}=0,  \qquad \text{for} \quad i=1, \ldots , n-1
\end{equation}
with $\bs u_i=-\sum_{j=1}^na_j^i \bs e_j$. Then, $\eqref{eq:Perp}$ is equivalent to
\[
a_n^1=a_n^2=\cdots =a_n^{n-1}=0
\]
which implies that the matrix $A$ given in $\eqref{eq:matrix}$ is written as
\begin{equation*}
A=\begin{pmatrix}
1      &   0      &  \cdots   & 0         &  0       \\
  a_2^1         & 1 &  \cdots    &  0     &      0   \\
     \vdots      &  \vdots      & \ddots &  \vdots      &    \vdots   \\
   a_{n-1}^1  &    a_{n-1}^2     &  \cdots      & 1 &   0     \\
      0      &   0     &  \cdots     &  0    &1
\end{pmatrix}.
\end{equation*}
Let us denote the $(n-1)$-square submatrix 
\begin{equation*}
\begin{pmatrix}
1      &   0      &  \cdots    & 0& 0               \\
  a_2^1         & 1 &  \cdots    & 0&  0        \\
     \vdots      &  \vdots      & \ddots &  \vdots &\vdots        \\
      a_{n-2}^1  &    a_{n-2}^2     &  \cdots   & 1   & 0 \\
   a_{n-1}^1  &    a_{n-1}^2     &  \cdots   & a_{n-1}^{n-2}   & 1      
\end{pmatrix}
\end{equation*}
by $\widetilde{A}$ which corresponds to the stage $n-1$ Bott manifold $X'$ in Corollary $\ref{cor:splitting}$. 
Inductively, we apply Corollary $\ref{cor:splitting}$ into $\widetilde A$ which yields
\[
a_{n-1}^1=a_{n-1}^2=\cdots =a_{n-1}^{n-2}=0.
\]
As a result, we conclude that $A$ is the unit matrix. Thus, Theorem $\ref{thm:main}$ has been proved.

\section{Appendix}\label{sec:Appendix}
In this section, we shall see that Theorem \ref{thm:main} and Theorem \ref{thm:FY} are equivalent as long as we work over a complex number field $\C$.

Firstly, we recall the argument in \cite{FY24} briefly.
Let $X$ be an $n$-dimensional smooth projective variety, and let $L$ be an ample $\Q$-line bundle on $X$.
For an irreducible smooth subvariety $Z$ in $X$ with $\mathrm{Codim}_X(Z)=c$, we take the blow-up $\sigma:\hat{X}\rightarrow X$ along $Z$.
Denoting the ideal sheaf of $Z$ by $I_Z$, we consider the Cartier divisor $F$ in $\hat{X}$ defined by $\mathcal{O}_{\hat{X}}(-F)=I_Z\cdot\mathcal{O}_{\hat{X}}$.
We denote the Seshadri constant of $Z$ with respect to $L$ by
\[
	\ep=\ep_L(Z)=\max\set{x\in\R_{>0}| \sigma^*L-xF 	\text{ is nef on } \hat X}.
\]
Setting
\[
\mu_L=\frac{\bigl(L^{\cdot n-1}\cdot(-K_X) \bigr)}{(L^{\cdot n})},
\]
we define the invariant $\xi_L(Z)$ by
\begin{equation}\label{def:xi}
\xi_L(Z)=n\int_0^\ep\bigl((\sigma^*L-xF)^{\cdot n-1}\cdot(\sigma^*(K_X+\mu_L L)+(c-\mu_Lx)F) \bigr)dx.
\end{equation}
The invariant $\xi_L(Z)$ was originally introduced in \cite{Fuj15} for the case that 
 $L=-K_X$, i.e., $(X,-K_X)$ is a Fano variety.
 Later, this invariant was extended in \cite[Definition $2.8$]{DL23} for the case of any polarized variety $(X,L)$,
 which is called the $\beta$-invariant in the literature of K-stability.

Now, we shall consider the relation between $\xi_L(Z)$ and the Futaki invariant.
For $Z$, let us consider the test configuration $(\mathcal{X},\mathcal{L}_t)$ consisting of the blow up $\eta:\mathcal{X}\to X\times \mathbb{P}^1$ along $Z\times\{0\}$ and $\mathcal{L}_t:=\eta^* p^*L-t E$ for $t\in (0, \ep_L(Z))\cap \Q$.
The space $\mathcal{X}$ is called the degeneration of $X$ to the normal cone of $Z$ in \cite{RT07}.
Here $E$ is the exceptional divisor of the blow-up $\eta$. 
See \cite{RT07} for the details of the notions that appeared here.
As seen in Remark 2.6 in \cite{FY24}, the invariant $\xi_L(Z)$ coincides with the limit of the Donaldson-Futaki invariant $\mathrm{DF}(\mathcal{X}, \mathcal{L}_t)$ as $t$ goes to $\ep_L(Z)$.
 
For the relation among the $\beta$-invariant, the Donaldson-Futaki invariant and $\xi_L(Z)$, they
are equal to each other up to the multiplicative positive constant.
 Precisely, we have 
 \begin{equation}\label{eq:3Invariants}
 \xi_L(Z)=\lim_{t\to \ep_L(Z)} {\mathrm{DF}}(\mathcal X, \mathcal L_t)=\frac{1}{2(n-1)!}\beta_L(Z),
 \end{equation} 
where the first equality follows from \cite[Remark 2.6]{FY24} and the second equality follows from \cite[Proposition 3.9]{DL23}.
 \begin{remark}\label{rem:Fujita}\rm
In order to explain about geometric motivation of the invariant $\xi_L(D)$,
we assume that an $n$-dimensional polarized variety $(X,L)$ is slope semistable along a smooth prime divisor $D$. 
 As it was observed in the proof of \cite[Proposition 2.2]{FY24}, slope semistability implies that
 \begin{align*}
 \frac{1}{2(n-1)!}&\bigl( (L^{\cdot n-1})\cdot(-K_X) \bigr)\int_0^{\ep}\frac{1}{n!}\bigl((L-x D)^{\cdot n}\bigr)dx  \\
 \geqslant&\frac{1}{n!}\left(
 \int_0^\ep\frac{1}{2(n-1)!}\bigl((L-x D)^{\cdot n-1}\cdot(-K_X)\bigr)dx+\frac{1}{2}
 \left[ \frac{1}{n!}\bigl((L-x D)^{\cdot n}\bigr)
 \right]_0^\ep
\right)   \\
\Leftrightarrow \quad
\bigl((L^{\cdot n-1})&\cdot(-K_X)\bigr)\int_0^\ep \bigl((L-x D)^{\cdot n}\bigr)dx 
-(L^{\cdot n})\int_0^\ep \bigl((L-x D)^{\cdot n-1}\cdot(-K_X)\bigr)dx   \\
\hspace{3cm} & \geqslant \frac{(L^{\cdot n})}{n}\bigl((L-\ep D)^{\cdot n} -(L^{\cdot n})\bigr). 
\end{align*}
Plugging the equality
\begin{align*}
\int_0^\ep \bigl((L-x D)^{\cdot n-1}\cdot D\bigr)dx
&= \left[ -\frac{1}{n}\bigl((L-x D)^{\cdot n} \bigr) \right]_0^\ep\\ 
&= \frac{1}{n}\bigl((L^{\cdot n})-(L-\ep D)^{\cdot n} \bigr)
\end{align*}
into the above last inequality, we have
\begin{align}
&\int_0^\ep\Biggl( \bigl(L^{\cdot n-1}\cdot(-K_X)\bigr)\cdot(L-xD)^{\cdot n}
-(L^{\cdot n})\cdot\bigl( (L-xD)^{\cdot n-1}\cdot (-K_X)\bigr) \notag \\
& \hspace{7.6cm} +(L^{\cdot n})\cdot\bigl( (L-xD)^{\cdot n-1}\cdot D\bigr) 
\Biggr) dx \geqslant 0 \notag \\
\Leftrightarrow \quad &
\int_0^\ep \bigl((L-xD)^{\cdot n-1} \bigr)\cdot \bigl( \mu_L(L-xD)+K_X+D \bigr)dx \geqslant 0. \label{ineq:xi_inv}
\end{align}
We remark that our assumption implies that $c=1$ and $\sigma$ is the identity map in $\eqref{def:xi}$.
Thus, we see that the left hand side of $\eqref{ineq:xi_inv}$ equals $\xi_L(D)/n$.
 \end{remark}
From Remark \ref{rem:Fujita} and \cite{RT07}, we can replace the slope semistability along $D$ with the semipositivity  of $\xi_L(D)$.
Then, Fujita and the third author proved that if $\xi_L(D) \geqslant 0$ for any ample $\Q$-line bundle $L$ and any smooth prime divisor $D$ on an $n$-dimensional Bott manifold $X$, then $X\cong (\P^1)^n$.

For our purpose, we assume that $(X,L)$ is an $n$-dimensional smooth polarized toric variety and $D$ is a smooth toric prime divisor on $X$.
Let $P$ be the moment polytope of $(X,L)$, and $\bs v_D \in N_\R$ the ray generator corresponding to $D$.
Let 
\[
\widetilde{\ell}_D(\bs x):= \langle \bs x, \bs v_D\rangle + \widetilde{a}_D
\] 
be the affine function induced by $\bs v_D$ where the constant $\widetilde{a}_D$ is determined so that 
\[
\min \set{\widetilde{\ell}_D(\bs x) | \bs x \in P}=0.
\]
Let $\bs v_D^\sharp$ be the holomorphic vector field induced by $\bs v_D$.
Then, the Futaki invariant of 
$\bs v_D^\sharp$ is equal to the Donaldson-Futaki invariant
\[
\mathrm{Vol}(P) \int_{\partial P} \widetilde{\ell}_D(\bs x) d\sigma
-\mathrm{Vol}(\partial P)\int_{P} \widetilde{\ell}_D(\bs x) dv
\]
of the product test configuration induced by $\bs v_D^\sharp$. 

\begin{lemma}\label{lem:xi_futaki}
Let $(X,L)$ be a smooth polarized toric variety. Let $D$ be a smooth toric prime divisor on $X$.
If the pseudo-effective threshold
\[
\tau_L(D):=\sup\set{t\in\R | \bigl((L-t D)^{\cdot n}\bigr)>0}
\]
is equal to the Seshadri constant $\ep_L(D)$, then the Futaki invariant of $\bs v_D^\sharp$ is equal to $\xi_L(D)$.
\end{lemma}
\begin{proof}
Let $\ell_D(\bs x)$ be the defining affine function of the facet of $P$ corresponding to $D$: 
\[
\ell_D(\bs x):=\langle \bs x, \bs v_D \rangle + a_D.
\]
Let $\mathcal{P}$ be the $(n+1)$-dimensional moment polytope of (the compactification of) the test configuration $(\mathcal{X},\mathcal{L}_t)$ as above.
Then, the polytope $\mathcal{P}$ coincides with 
\[
\mathcal{P}_t=\Big\{(\bs x, h)\in P \times \R \mid\,
		\max\set{\ell_D(\bs x) -t, 0} \le h \le \max \set{\ell_D(\bs x)\mid\, \bs x \in P} \Big\}.
\]
Remark that the level set
\[
\mathcal{P}_t\cap \set{h=0}=
	\set{\bs x \in P | \ell_D(\bs x) -t =0}
\]
is the moment polytope of $(X, L-tD)$ as long as $t\in (0,\ep_L(D))\cap \Q$.

On the other hand, the moment polytope $\mathcal P'$ of (the compactification of) the product test configuration induced by $\bs v_D^\sharp$ is given by
\[
\mathcal{P}'=
\Big\{
(\bs x, h)\in P \times \R 
\mid\,
\widetilde{\ell}_D(\bs x) \leqslant h \leqslant \max \set{\widetilde{\ell}_D(\bs x)\mid\, \bs x \in P}
\Big\}.
\]
The assumption that $\ep_L(D)=\tau_L(D)$ implies $\ell_D=\widetilde{\ell}_D$.
In particular, $\mathcal{P}_{\ep_L(D)}$ coincides $\mathcal{P}'$.
Hence, we have 
\[
F_{X_P}(\bs v_D^\sharp)= \lim_{t\to \ep_L(D)}\mathrm{DF}(\mathcal{X},\mathcal{L}_t)=\xi_L(D).
\]
The proof is completed.
\end{proof}
Firstly, we show that Theorem \ref{thm:main} implies Theorem \ref{thm:FY}. Assume that $X$ is an $n$-dimensional slope semistable Bott manifold whose moment polytope is given by $\eqref{eq:polytope_P}$.
Let $D_{2n-1}$ and $D_{2n}$ be the smooth toric divisors corresponding to $\bs v_{2n-1}$ and $\bs v_{2n}$ respectively. 
Then, the moment polytope of $(X, L-tD_{2n})$ is equal to
\[
P_{\, [-a_{2n}+t, a_{2n-1}]}
=
P_{(-\infty,+\infty)}
\cap
\set{-a_{2n}+t \leqslant x_n \leqslant a_{2n-1}}.
\] 
Since we have
\[
\ep_L(D_{2n})=\tau_L(D_{2n})=a_{2n-1}+a_{2n},
\]
Lemma \ref{lem:xi_futaki} implies that
\[
	F_{X}(\bs v_{D_{2n}}^{\sharp})
	=
	F_{X}(x_{n})
	=
	\xi_L(D_{2n}) 
	\geqslant
	0,
\]
where the last inequality follows from the assumption of slope semistability. 
Similarly, we have
$$
	F_{X}(\bs v_{D_{2n-1}}^{\sharp})
	=
	F_{X_P}(-x_{n})
	=
	\xi_L(D_{2n-1}) 
	\geqslant
	0,
$$
because the moment polytope of $(X, L-tD_{2n-1})$ is equal to
\[
	P_{\, [-a_{2n}, a_{2n-1}-t]}
	=
	P_{(-\infty,+\infty)}
	\cap
	\set{-a_{2n} \leqslant x_n \leqslant a_{2n-1}-t }.
\] 
Therefore, we have
$$
	F_{X_P}(x_n)=0.
$$
Now, Corollary \ref{cor:splitting} implies the splitting $X=X'\times \P^1$ where $X'$ is the Bott manifold of dimension $n-1$.
Applying the above argument to $X'$ inductively as in Section \ref{sec:induction}, we find that $X\cong (\P^1)^n$.

Finally, we will prove the converse: Theorem \ref{thm:FY} implies Theorem \ref{thm:main}.
Let us take the divisor $D$ as in Lemma 3.1 in \cite{FY24}.
Then, Lemma 3.3 (2) in \cite{FY24} assures that
\[
	\ep_L(D)=\tau_L(D)=1.
\]
Under the above condition, we have
\begin{equation}\label{eq:xi_beta}
	\xi_L(D) 
	=
	\frac{1}{2(n-1)!}\beta_L(D)
\end{equation}
by $\eqref{eq:3Invariants}$. From \cite[$(5.7)$]{DL23}, we have the formula of $\beta_L(D)$ such that
\begin{equation}\label{eq:beta}
\beta_L(D)=(A_X(D)-1)\Vol(P)+\Vol(\p P)\braket{b_P-b_{\p P}, \bs v_D},
\end{equation}
where $A_X(D)$ is the log discrepancy of $D$, $b_P$ and $b_{\p P}$ are the barycenters of $P$ and $\p P$ respectively. Then $\eqref{eq:xi_beta}$ and $\eqref{eq:beta}$ imply that
\[
\xi_L(D)=\frac{1}{2(n-1)!} \beta_L(D)=\frac{1}{2(n-1)!}(A_X(D)-1)\mathrm{Vol}(P)\geqslant 0.
\]
Hence, we can apply Proposition 4.2 in \cite{FY24} to $(X,D)$.
Repeating the above argument inductively, we get the conclusion.

\bibliographystyle{amsalpha}

\end{document}